\newcommand{\equi}{\ensuremath{\Leftrightarrow}}
\newcommand{\ldot}{\ensuremath{\textbf{.}}}
\newcommand{\ssq}{\ensuremath{\subseteq}}
\newcommand{\eps}{\ensuremath{\varepsilon}}
\newcommand{\T}{\ensuremath{\mathbb{T}}}
\newcommand{\N}{\ensuremath{\mathbb{N}}} 
\newcommand{\R}{\ensuremath{\mathbb{R}}}
\newcommand{\Z}{\ensuremath{\mathbb{Z}}}
\newcommand{\Id}{\ensuremath{\mathrm{Id}}}
\newcommand{\kreis}{\ensuremath{\mathbb{T}^{1}}}
\newcommand{\alphlist}{\begin{list}{(\alph{enumi})}{\usecounter{enumi}\setlength{\parsep}{2pt}
      \setlength{\itemsep}{1pt} \setlength{\topsep}{5pt}
      \setlength{\partopsep}{3pt}}}
\newcommand{\arablist}{\begin{list}{(\arabic{enumi})}{\usecounter{enumi}\setlength{\parsep}{2pt}
          \setlength{\itemsep}{1pt} \setlength{\topsep}{5pt}
          \setlength{\partopsep}{3pt}}}
\newcommand{\romanlist}{\begin{list}{(\roman{enumi})}{\usecounter{enumi}\setlength{\parsep}{2pt}
              \setlength{\itemsep}{1pt} \setlength{\topsep}{5pt}
              \setlength{\partopsep}{3pt}}}
 \newcommand{\listend}{\end{list}}
\newcommand{\bulletlist}{\begin{list}{$\bullet$}{\setlength{\parsep}{2pt}
                \setlength{\itemsep}{1pt} \setlength{\topsep}{5pt}
                \setlength{\partopsep}{3pt}\setlength{\leftmargin}{15pt}}}
\newcommand{\nLim}{\ensuremath{\lim_{n\rightarrow\infty}}}
\newcommand{\tLim}{\ensuremath{\lim_{t\rightarrow\infty}}}
\newcommand{\inergsum}{\ensuremath{\sum_{i=0}^{n-1}}}
\newcommand{\ntel}{\ensuremath{\frac{1}{n}}}
\newtheoremstyle{tobthm}{3pt}{3pt}{\itshape}{0pt}{\bfseries}{.}{0.5eM}{}
\theoremstyle{tobthm}
\newtheorem{definition}{Definition}[section]
\newtheorem{thm}[definition]{Theorem}
\newtheorem{lem}[definition]{Lemma}
\newtheorem{cor}[definition]{Corollary}  
\newtheorem{prop}[definition]{Proposition}
\newtheorem{questions}[definition]{Questions}
\newtheoremstyle{tobrem}{3pt}{3pt}{\normalfont}{0pt}{\bfseries}{.}{0.5em}{}
\theoremstyle{tobrem}
\newtheorem{rem}[definition]{Remark}
\numberwithin{equation}{section}
\numberwithin{figure}{section}
\title{\Large\textsc{\titlename}}
\author{J.~Aliste-Prieto\thanks{Centro de Modelamiento Matem\'atico,
    Universidad de Chile, Santiago. Email: {\tt jaliste@dim.uchile.cl}} and
  T.~J\"ager\thanks{Institute for Analysis, TU Dresden, Germany.
    Email: {\tt Tobias.Oertel-Jaeger@tu-dresden.de}}}
\newcommand{\titlename}{Almost periodic structures and the
  semiconjugacy problem
}
\begin{document}

\setlength{\abovedisplayskip}{1.0ex}
\setlength{\abovedisplayshortskip}{0.8ex}

\setlength{\belowdisplayskip}{1.0ex}
\setlength{\belowdisplayshortskip}{0.8ex}

\maketitle 

\abstract{The description of almost periodic or quasiperiodic structures has a long
  tradition in mathematical physics, in particular since the discovery
  of quasicrystals in the early 80's.  Frequently, the modelling of
  such structures leads to different types of dynamical systems which
  include, depending on the concept of quasiperiodicity being considered,
  skew products over quasiperiodic or almost-periodic base flows,
  mathematical quasicrystals or maps of the real line with
  almost-periodic displacement. An important problem in this context
  is to know whether the considered system is semiconjugate to a rigid
  translation. We solve this question in a general setting that
  includes all the above-mentioned examples and also allows to treat
  scalar differential equations that are almost-periodic both in space
  and time. To that end, we study a certain class of flows that
  preserve a one-dimensional foliation and show that a semiconjugacy
  to a minimal translation flow exists if and only if a boundedness
  condition, concerning the distance of orbits of the flow to
  those of the translation, holds.  }

\section{Introduction}

A classical topic in the theory of dynamical systems is the study of
cohomological equations. For minimal base transformations, the
well-known Gottschalk-Hedlund Theorem states that the existence of
continuous solutions is equivalent to the boundedness of the
associated cocycle.
\begin{thm}[Gottschalk-Hedlund] \label{t.gottschalk-hedlund}
  Suppose $\Omega$ is a compact metric space,
  $\Phi:\T\times\Omega\to\Omega,\ (t,x) \mapsto \Phi_t(x)$ is a
  minimal flow with time $\T=\Z$ or $\R$, $\rho\in\R$ and
  $\alpha:\T\times\Omega\to\R,\ (t,x)\mapsto\alpha_t(x)$ is a
  continuous cocycle over $\Phi$, that is,
  \begin{equation}
  \label{e.cocycle-equation}
  \alpha_{s+t}(x)\ =\ \alpha_s(\Phi_t(x))+\alpha_t(x) \ .
  \end{equation}
  Then the cohomological equation
  \begin{equation}
    \label{e.gh-ce}
    \alpha_t \ = \ \gamma - \gamma\circ \Phi_t + t\rho \quad \forall t\in\T
  \end{equation}
  has a continuous solution $\gamma : \Omega\to \R$ if and only if there
  exists $x\in\Omega$ such that $\sup_{t\in\T} \left| \alpha_t(x)
    -t\rho \right| < \infty$.  Moreover, in this case $\gamma$ can be
  defined by 
  \begin{equation}\label{e.coboundary}
  \gamma(x) \ = \ \limsup_{t\to\infty} \alpha_t(x)-t\rho \ .
\end{equation}
If $\Phi$ is not minimal, then (\ref{e.coboundary}) yields at least a bounded
solution of (\ref{e.cocycle-equation}).
\end{thm}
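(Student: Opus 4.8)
The plan is to first normalise and dispatch the easy direction, then to build a suspension flow whose minimal sets encode the solution. Replacing $\alpha_t$ by $\alpha_t-t\rho$---which is again a continuous cocycle over $\Phi$ by a direct check of (\ref{e.cocycle-equation})---I may assume $\rho=0$, so that (\ref{e.gh-ce}) asks for $\alpha$ to be a continuous coboundary $\alpha_t=\gamma-\gamma\circ\Phi_t$. Necessity of the boundedness condition is then immediate: if such a continuous $\gamma$ exists, then $|\alpha_t(x)|\le 2\supnorm{\gamma}<\infty$ for all $x$ and $t$ by compactness of $\Omega$. For the converse I would first upgrade boundedness at the single point $x_0$ to a uniform bound: choosing $t_k$ with $\Phi_{t_k}(x_0)\to x$ (possible by minimality), the cocycle identity gives $\alpha_t(\Phi_{t_k}(x_0))=\alpha_{t+t_k}(x_0)-\alpha_{t_k}(x_0)$, and letting $k\to\infty$ yields $|\alpha_t(x)|\le 2M$ for every $x$ and $t$, where $M=\sup_t|\alpha_t(x_0)|$.

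The core construction is the suspension flow $\Psi_t(x,s)=(\Phi_t(x),\,s+\alpha_t(x))$ on $\Omega\times\R$, which is a genuine flow precisely because $\alpha$ satisfies (\ref{e.cocycle-equation}). By the preceding bound the orbit of $(x_0,0)$ stays inside the compact set $\Omega\times[-M,M]$, so its closure contains a nonempty compact $\Psi$-minimal set $K\ssq\Omega\times[-M,M]$. The projection of $K$ to $\Omega$ is closed and $\Phi$-invariant, hence equals $\Omega$ by minimality of $\Phi$. The whole theorem now hinges on the claim that $K$ is the graph of a (necessarily continuous) function $g\colon\Omega\to\R$: granting this, invariance of $K$ forces $g(\Phi_t(x))=g(x)+\alpha_t(x)$, and $\gamma:=-g$ solves (\ref{e.gh-ce}). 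I expect this graph property to be the main obstacle.

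To establish it I would study the fibre envelopes $u(x)=\sup\{s:(x,s)\in K\}$ and $l(x)=\inf\{s:(x,s)\in K\}$, which are finite and upper- resp.\ lower-semicontinuous by compactness of $K$. Invariance gives $u(\Phi_t(x))=u(x)+\alpha_t(x)$ and likewise for $l$, so the fibre width $w=u-l$ is $\Phi$-invariant and upper-semicontinuous; it attains its maximum on a closed invariant set, which equals $\Omega$ by minimality, hence $w\equiv c$ is constant. If $c>0$, the dense invariant graph $\{(x,u(x))\}$ is mapped into $\{(x,l(x))\}$ by the vertical translation $(x,s)\mapsto(x,s-c)$; passing to closures and invoking minimality of $K$ shows that this translation maps $K$ into itself, so that $(x,s-nc)\in K$ for all $n\in\N$, contradicting $K\ssq\Omega\times[-M,M]$. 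Therefore $c=0$, every fibre is a singleton, and a compact graph over the compact space $\Omega$ is automatically continuous.

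It remains to match this solution with the explicit formula (\ref{e.coboundary}). From the cocycle identity alone, $\gamma(x):=\limsup_{t\to\infty}(\alpha_t(x)-t\rho)$ satisfies $\gamma\circ\Phi_s=\gamma-(\alpha_s-s\rho)$ wherever the $\limsup$ is finite, so it is a solution of (\ref{e.gh-ce}) that is bounded on the set where the pointwise hypothesis holds, though in general not continuous; this is the content of the non-minimal addendum. In the minimal case, writing $\alpha_t-t\rho=\gamma_0-\gamma_0\circ\Phi_t$ for the continuous solution $\gamma_0$ constructed above and using density of the orbit $\{\Phi_t(x)\}$ gives $\liminf_{t\to\infty}\gamma_0(\Phi_t(x))=\min_\Omega\gamma_0$ independently of $x$; hence the right-hand side of (\ref{e.coboundary}) equals $\gamma_0-\min_\Omega\gamma_0$, a continuous coboundary, confirming that the $\limsup$-formula indeed produces the asserted continuous solution.
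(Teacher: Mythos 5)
Your proposal is correct, but there is no proof in the paper to compare it against: the paper quotes Gottschalk--Hedlund as classical background (it is used as a black box in the proof of Theorem~\ref{t.gotthead}) and never proves it. What you give is a complete and essentially standard proof via the skew-product translation flow $\Psi_t(x,s)=(\Phi_t(x),s+\alpha_t(x))$ on $\Omega\times\R$: the normalisation to $\rho=0$, the upgrade from pointwise to uniform boundedness via the cocycle identity and minimality, the existence of a compact $\Psi$-minimal set $K$ projecting onto $\Omega$, and the graph property via the semicontinuous envelopes $u,l$, the invariant upper semicontinuous width $w=u-l$ (constant by minimality), and the vertical translation $T_c$ commuting with $\Psi$, whose iteration contradicts boundedness of $K$ unless $c=0$ --- all of these steps check out, including the fact that $\overline{\{(x,u(x))\}}=K$ by minimality so that $T_c(K)=K$. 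Your final paragraph correctly reconciles the abstract solution with formula (\ref{e.coboundary}): since any two continuous solutions differ by a constant, and $\limsup_{t\to\infty}(\alpha_t(x)-t\rho)=\gamma_0(x)-\liminf_{t\to\infty}\gamma_0(\Phi_t(x))=\gamma_0(x)-\min_\Omega\gamma_0$, the $\limsup$-formula does produce a continuous solution; note that this step silently uses that forward orbits are dense in a compact minimal flow (via $\omega$-limit sets being fully invariant), which is standard but worth stating. You also read the non-minimal addendum in the only sensible way --- under the (uniform) boundedness hypothesis, (\ref{e.coboundary}) gives a bounded, possibly discontinuous solution of the cohomological equation (\ref{e.gh-ce}), the reference to (\ref{e.cocycle-equation}) in the statement being evidently a slip --- and this is exactly how the paper uses the addendum in Section~\ref{Proof}, where it further observes continuity of the restriction to minimal sets. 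It is worth remarking that your envelope argument ($u$ u.s.c., $l$ l.s.c., plateaux collapsed by a translation) is the same kind of semicontinuity mechanism the paper itself deploys, with $t^+$ and $t^-$, to upgrade continuity of $\gamma$ from a minimal set to all of $\Omega$ in the proof of Theorem~\ref{t.gotthead}.
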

A closely related question is that of the existence of a semiconjugacy, also
called a factor map, from a given map to a rigid translation. For example,
suppose $f$ is an orientation-preserving homeomorphism of the circle with lift
$F:\R\to\R$, and $\rho=\nLim (F^n(x)-x)/n$ is the rotation number of
$f$. We say a continuous onto map $h:\kreis\to\kreis$ is a \emph{semi-conjugacy}
from $f$ to the rotation $R_\rho$ with angle $\rho$ if there holds $h\circ f =
R_\rho\circ h$. Writing $h(x) = x + \gamma(x) \bmod 1$ with continuous
$\gamma:\kreis \to \R$, it is easy to see that $h$ is a semi-conjugacy if
$\gamma$ is a solution of the cohomological equation (\ref{e.gh-ce}) with
$\Phi_n(x)=F^n(x)$ and $\alpha_n(x) = F^n(x)-x$. In comparison to the
Gottschalk-Hedlund Theorem, it is noteworthy that this equation always has a
continuous solution provided that $\rho$ is irrational, such that $R_\rho$ is
minimal, independent of any minimality assumption on $f$. This is the main
content of the celebrated Poincar\'e Classification Theorem.
\begin{thm}[Poincar\'e] \label{t.poincare}Suppose $f$ is an orientation-preserving
  homeomorphism of the circle with lift $F:\R\to\R$. Then the limit
  $\rho(F)=\nLim (F^n(x)-x)/n$ exists and is independent of $x\in\R$.
  Furthermore, there holds
  \alphlist
  \item[(i)] $\rho(F)$ is rational if and only if $f$ has a periodic orbit.
  \item[(ii)] $\rho(F)$ is irrational if and only if $f$ is
    semi-conjugate to the irrational rotation $R_{\rho(F)}$.
  \listend
\end{thm}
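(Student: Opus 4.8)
The plan is to treat the three assertions in order, each building on the previous, and to reduce the substantive part (the irrational case of (ii)) to a single combinatorial fact: that one grand orbit visits the circle in exactly the same cyclic order as the corresponding orbit of the rigid rotation. I would first record the normalisation $F(x+1)=F(x)+1$, so that $F$ is an increasing homeomorphism of $\R$ commuting with the unit translation and $\phi_n(x):=F^n(x)-x$ is $1$-periodic.

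\textbf{Existence of $\rho(F)$.} Setting $b_n:=\sup_x\phi_n(x)$ and using $F^{m+n}(x)-x=\phi_m(F^n(x))+\phi_n(x)$ together with $1$-periodicity gives the subadditivity $b_{m+n}\le b_m+b_n$, so Fekete's lemma yields $b_n/n\to\inf_k b_k/k=:\rho$. The same bound for $\inf_x\phi_n$ gives the same limit, because $\sup_x\phi_n-\inf_x\phi_n\le 1$: for $x_1<x_2<x_1+1$ monotonicity and commutation with integer translations force $0<F^n(x_2)-F^n(x_1)<1$, so $|\phi_n(x_2)-\phi_n(x_1)|<1$. Hence $(F^n(x)-x)/n\to\rho$ independently of $x$.

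\textbf{Part (i).} If $f$ has a $q$-periodic point, its lift satisfies $F^q(\tilde x)=\tilde x+p$ for some $p\in\Z$; iterating gives $F^{nq}(\tilde x)=\tilde x+np$, so $\rho=p/q$. Conversely, if $\rho=p/q$, I would study $g(y):=F^q(y)-y-p$, continuous and $1$-periodic. If $g$ never vanished it would have constant sign; in the case $g>0$ compactness gives $\delta>0$ with $F^q(y)\ge y+p+\delta$, whence $F^{nq}(y)\ge y+n(p+\delta)$ and $q\rho\ge p+\delta>p$, contradicting $q\rho=p$ (the case $g<0$ is symmetric). Thus $g$ vanishes somewhere, producing a fixed point of $f^q$.

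\textbf{Part (ii), the core.} Assume $\rho$ irrational. The lemma I would isolate is a comparison principle: for every $n\in\Z$ and $m\ge 1$, the sign of $F^m(y)-y-n$ is independent of $y$ and equals the sign of $m\rho-n$. Indeed, if $y\mapsto F^m(y)-y-n$ changed sign the intermediate value theorem would give a fixed point of $F^m-n$, hence a periodic orbit, contradicting (i) and irrationality of $\rho$; so it has constant sign, and iterating the resulting inequality $F^m(y)>y+n$ or $F^m(y)<y+n$ identifies that sign with that of $m\rho-n$ (using $m\rho\neq n$). Applying the order-preserving $F^{-m_2}$ to $F^{m_1}(x_0)+n_1<F^{m_2}(x_0)+n_2$ reduces it to $F^{m_1-m_2}(x_0)<x_0+(n_2-n_1)$, which by the comparison holds iff $(m_1-m_2)\rho<n_2-n_1$. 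Hence the grand orbit $D:=\{F^m(x_0)+n:m,n\in\Z\}$ is ordered in $\R$ exactly like $\Z+\rho\Z$, so $H_0:F^m(x_0)+n\mapsto m\rho+n$ is well defined, strictly monotone, has dense image, and satisfies $H_0\circ F=R_\rho\circ H_0$ on $D$. I would then extend $H_0$ to a nondecreasing surjection $H:\R\to\R$ by $H(y):=\sup\{H_0(d):d\in D,\ d\le y\}$; continuity follows from density of $H_0(D)$ in $\R$, and $H(y+1)=H(y)+1$ from translation invariance of $D$, so $H$ descends to a continuous onto $h:\kreis\to\kreis$. The intertwining relation holds on $\overline{D}$ by continuity and on each complementary interval of $\overline{D}$ (which $F$ permutes, and on which $H$ is constant), hence everywhere, giving $h\circ f=R_\rho\circ h$. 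The reverse implication carries no extra content: the target $R_{\rho(F)}$ is an irrational rotation precisely when $\rho(F)\notin\Q$.

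\textbf{Main obstacle.} I expect the comparison lemma and the passage from it to a genuinely continuous, globally defined semiconjugacy to be the delicate part—in particular checking continuity of $H$ and that the intertwining survives the extension across the gaps of $\overline{D}$ when $f$ is not minimal (the Denjoy situation, where $\overline{D}$ is a Cantor set and $h$ collapses wandering intervals).
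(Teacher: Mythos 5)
Your proof is correct, but it takes a genuinely different route from the paper. The paper never proves Theorem~\ref{t.poincare} from scratch: the existence of $\rho(F)$ and part~(i) are treated as classical background, and the substantive half of part~(ii) is derived in Section~\ref{Applications} as a special case of Theorem~\ref{t.gotthead}, by taking $\Omega=\kreis$, $\varphi_t(x)=x+t\bmod 1$, trivial transversal action $\omega$, and $\tau_n(x)=F^n(x)-x$, and then invoking the classical a priori bound $|F^n(x)-x-n\rho|\le 1$, i.e.\ bounded mean motion; the semiconjugacy there is $h(x)=x\odot\gamma(x)$ with $\gamma(x)=\limsup_{t\to\infty}(\tau_t(x)-t\rho)$, a Gottschalk--Hedlund-type construction whose continuity is forced by order-preservation on leaves and plateaus over the gaps of a minimal set. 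You instead construct the semiconjugacy by the classical order-isomorphism method going back to Poincar\'e: the comparison lemma matching the order of the grand orbit $D=\{F^m(x_0)+n\}$ with that of $\Z+\rho\Z$, the map $H_0:F^m(x_0)+n\mapsto m\rho+n$, and its monotone sup-extension $H$. Both work; yours is elementary and self-contained (it also proves existence of $\rho(F)$ and part~(i), which the paper only cites), while the paper's approach buys generality --- the identical argument yields semiconjugacies for all foliation-preserving flows with bounded mean motion --- at the price of needing the bound $|F^n(x)-x-n\rho|\le 1$ as input, a bound your Fekete argument in fact reproves (subadditivity gives $b_n\ge n\rho\ge a_n$, and the oscillation of $\phi_n$ is at most $1$). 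It is worth noting that the two proofs converge at the Denjoy phenomenon: your constancy of $H$ on the complementary intervals of $\overline{D}$ is exactly the collapse mechanism appearing in the paper's plateau identity~(\ref{e.plateau}). One small repair on your side: you state the comparison lemma only for $m\ge 1$ but apply it with exponent $m_1-m_2$ of arbitrary sign; for $m\le 0$ apply it to $F^{-m}$ (or observe the sign symmetry under $m\mapsto -m$, $n\mapsto -n$), which is routine.
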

The core part of this result is the existence of a semiconjugacy to an
irrational rotation in (ii).  In general, we call the task of finding
equivalent conditions for the existence of a semiconjugacy to a given
minimal translation the {\em semiconjugacy problem}. (Here
`translation' will be understood in a broad sense, made precise
below.) Recently, the authors have independently made some progress on
this question. In \cite{jaeger/stark:2006}, a Poincar\'e-like
classification is given for skew-products on the two-torus over
irrational rotations (so-called quasiperiodically forced circle
homeomorphisms).  For maps on mathematical quasicrystals, a similar
classification was obtained in \cite{aliste:2010}. While in both
situations strong use is made of the order-preserving properties of
the considered systems, corresponding to the existence of a
one-dimensional foliation that is preserved by the dynamics, the
methods employed are quite different.

In order to obtain a unified proof of these results, we introduce a
general setting below that also allows to treat a number of other system
classes, including minimally forced circle homeomorphisms, increasing
maps on the real line with almost-periodic displacement (see
\cite{kwapisz:2000}) and scalar differential equations with
almost-periodicity both in space and time. All these different types
of systems have been studied rather independently in the literature so
far. The proposed framework should allow to study them from a more
global point of view and to identify similarities and differences between
them. To that end, we provide some structure results for our general
class of flows, which identify important subcases to which some of the
above examples can be associated.
\medskip

 \label{MainResults}

Suppose $\Omega$ is a compact metric space, $\varphi : \R \times \Omega \to
\Omega,\ (t,x) \mapsto \varphi_t(x) = x\odot t$ is a continuous flow on $\Omega$
and $\omega : \T \times \Omega\to \Omega,\ (t,x)\mapsto \omega_t(x)$ is a flow
with discrete or continuous time (that is, $\T=\Z$ or $\T=\R$) which commutes
with $\varphi$, that is,
\begin{equation}
  \label{e.omega_preserves_leaves}
  \omega_s(x\odot t) \ = \ \omega_s(x)\odot t \ .
\end{equation}
We say $\rho\in\R$ is {\em $(\varphi,\omega)$-irrational} if the {\em translation flow} 
\begin{equation}
  \label{e.translation}
  T^\rho : (t,x) \ \mapsto \  T^\rho_t(x) = \omega_t(x)\odot t\rho  
\end{equation}
is minimal. Otherwise we say that $\rho$ is {\em $(\varphi,\omega)$-rational}.  Now,
assume that $\tau :\T\times\Omega\to\R$ is a continuous function that
satisfies
\begin{equation}
  \label{e.tau-property}
  \tau_{s+t}(x) \ = \ \tau_s(\omega_t(x)\odot\tau_t(x)) + \tau_t(x) \ ,
\end{equation}
such that
\begin{equation}
  \label{e.fdefinition}
  \Phi = \Phi^{\varphi,\omega,\tau}  \ :\ \T \times \Omega \to \Omega \quad , \quad (t,x) \  
  \mapsto \ \Phi_t(x) = \omega_t(x)\odot \tau_t(x) \ 
\end{equation}
defines a flow with time $\T$ on $\Omega$. We call $\omega$ {\em
  transversal action} and $\tau$ {\em translation function} of
$\Phi$.  

An interpretation of the structure of (\ref{e.fdefinition}) is that there is a
one-dimensional foliation on $\Omega$ given by the orbits of $\varphi$, and the
flow $\Phi$ preserves this foliation in the sense that {\em leaves}
$\varphi_\R(x)=\{\varphi_t(x)\mid t\in\R\}$ are send to leaves
$\varphi_{\R}(\omega_t(x))$ by $\Phi_t$. We will further require $\Phi$
preserves the order on these leaves, which amounts to say that 
\begin{equation}
  \label{e.f_preserves_order}
  t + \tau(x\odot t) \ \geq \ \tau(x) \qquad \forall x\in\Omega, t \geq 0 \ .
\end{equation}
(See Remark~\ref{r.order} below.)  If the limit
\begin{equation}
  \label{e.rotation_number}
  \rho(\Phi) \ = \ \tLim \tau_t(x)/t \ . 
\end{equation}
exists and is independent of $x\in\Omega$, we call it the {\em
  translation number} of $\Phi$. Further, we say $\Phi$ has {\em
  bounded mean motion} if there exists a constant $C>0$ such that
\begin{equation}
  \label{e.bmm}
  \left| \tau_t(x)  - t\rho(\Phi) \right| \ \leq \
   C \qquad \forall x\in\Omega,\ t\in\T \ . 
\end{equation}

\begin{thm}
  \label{t.gotthead} Suppose $\rho(\Phi)$ is $(\varphi,\omega)$-irrational. Then the
  cohomological equation
  \begin{equation}
    \label{e.cohomological_equation}
    \tau_t(x) \ = \ \gamma(x) -\gamma\circ \Phi_t(x) + t\rho(\Phi) \ 
  \end{equation}
  has a continuous solution $\gamma:\Omega\to\R$ if and only if $\Phi$ has bounded
  mean motion. In this case, the map
  \begin{equation}
    \label{e.h_definition}
    h(x) \ = \ x\odot \gamma(x)
  \end{equation}
  provides a semi-conjugacy from $\Phi$ to the translation flow
  $T^{\rho(\Phi)}$, that is, $h\circ \Phi_t = T^{\rho(\Phi)}_t\circ h$
for all $t\in\T$. Moreover, $h$ can be chosen to be 
  order-preserving on the leaves $\varphi_\R(x), x\in\Omega$, that is,
  (\ref{e.f_preserves_order}) holds with $\tau=\gamma$. 
\end{thm}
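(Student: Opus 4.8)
The plan is to read the statement as a foliated, non-minimal version of the Gottschalk--Hedlund Theorem~\ref{t.gottschalk-hedlund}, in which minimality has been shifted from the dynamics to the target flow $T^{\rho(\Phi)}$. Write $\rho=\rho(\Phi)$ and set $\beta_t(x)=\tau_t(x)-t\rho$. From (\ref{e.tau-property}) one checks directly that $\beta$ is a continuous cocycle over $\Phi$, namely $\beta_{s+t}(x)=\beta_t(\Phi_s(x))+\beta_s(x)$, and that (\ref{e.bmm}) says exactly $\sup_{t,x}|\beta_t(x)|\le C$. The easy direction is then immediate: if a continuous $\gamma$ solves (\ref{e.cohomological_equation}) then $\beta_t(x)=\gamma(x)-\gamma(\Phi_t(x))$, whence $|\beta_t(x)|\le 2\sup_\Omega|\gamma|<\infty$, i.e.\ $\Phi$ has bounded mean motion. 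Conversely, I would first record that \emph{any} solution $\gamma$ makes the map $h$ of (\ref{e.h_definition}) a semiconjugacy: expanding $h(\Phi_t(x))=\omega_t(x)\odot(\tau_t(x)+\gamma(\Phi_t(x)))$ and $T^\rho_t(h(x))=\omega_t(x)\odot(\gamma(x)+t\rho)$ with the help of (\ref{e.omega_preserves_leaves}) and the flow property of $\odot$, the two agree precisely because of (\ref{e.cohomological_equation}); and $h$ is order-preserving on leaves iff (\ref{e.f_preserves_order}) holds with $\tau=\gamma$. So the theorem reduces to producing, under bounded mean motion, a continuous and leaf-order-preserving solution.

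For the hard direction I would first exhibit a bounded, possibly discontinuous, solution. Put $\gamma_0(x)=\limsup_{t\to\infty}\beta_t(x)$. Using the cocycle identity in the form $\beta_t(\Phi_s(x))=\beta_{s+t}(x)-\beta_s(x)$ and letting $t\to\infty$, one obtains $\gamma_0(\Phi_s(x))=\gamma_0(x)-\beta_s(x)$, which is exactly (\ref{e.cohomological_equation}); moreover $|\gamma_0|\le C$. Condition (\ref{e.f_preserves_order}) applied to $\tau$ gives $s+\beta_t(x\odot s)\ge\beta_t(x)$ for $s\ge 0$, and passing to the $\limsup$ shows that $\gamma_0$ is itself leaf-order-preserving. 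Thus the only thing still missing is continuity.

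To regularize, I would transport everything to the skew flow $\Psi_t(x,r)=(\Phi_t(x),\,r-\beta_t(x))$ on $\Omega\times\R$, which is a flow by the cocycle property. Bounded mean motion confines each orbit to a bounded strip, so the closure $\mathcal{G}$ of the graph $\{(x,\gamma_0(x))\}$ is a compact $\Psi$-invariant subset of $\Omega\times[-C,C]$. The fibrewise extremal functions $\gamma^+(x)=\max\{r:(x,r)\in\mathcal{G}\}$ and $\gamma^-(x)=\min\{r:(x,r)\in\mathcal{G}\}$ are then an upper- and a lower-semicontinuous solution of (\ref{e.cohomological_equation}), since the time-$t$ map acts on each fibre by the increasing translation $r\mapsto r-\beta_t(x)$ and hence carries extremal points to extremal points; a short approximation argument shows both remain leaf-order-preserving. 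The decisive structural observation is that $p(x,r)=x\odot r$ intertwines $\Psi$ with the translation flow, $p\circ\Psi_t=T^\rho_t\circ p$, because $\tau_t(x)-\beta_t(x)=t\rho$ (using (\ref{e.omega_preserves_leaves})). Since $\rho$ is $(\varphi,\omega)$-irrational, $T^\rho$ is minimal, and as $p(\mathcal{G})$ is a nonempty compact $T^\rho$-invariant set we conclude $p(\mathcal{G})=\Omega$.

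The main obstacle is the final passage from these semicontinuous solutions to a continuous one, and this is exactly where minimality must be spent. The mechanism I would use is the one underlying Poincar\'e's Theorem~\ref{t.poincare}: a monotone section with no gaps is automatically continuous. The maps $h^\pm(x)=x\odot\gamma^\pm(x)$ are leaf-monotone and semiconjugate $\Phi$ to the minimal flow $T^\rho$, and surjectivity of $p$ shows that the (a priori multivalued) section encoded by $\mathcal{G}$ leaves no gap along any leaf, so along leaves it cannot jump. It then remains to rule out transverse discontinuities, i.e.\ to prove $\gamma^+=\gamma^-$. Their difference $\psi=\gamma^+-\gamma^-\ge 0$ is a $\Phi$-invariant, upper-semicontinuous function, and I would argue that if $\psi$ were positive somewhere then the associated gap in $\mathcal{G}$, transported by $\Psi$ and projected by the surjection $p$, would carve out a proper nonempty closed $T^\rho$-invariant subset of $\Omega$, contradicting minimality; hence $\psi\equiv 0$ and $\gamma:=\gamma^+=\gamma^-$ is continuous, leaf-order-preserving, and solves (\ref{e.cohomological_equation}). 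Making this gap-to-invariant-set step precise---carefully handling possible periodic leaves of $\odot$ and the interplay between the along-leaf order and the transverse topology of $\Omega$---is the technical heart of the proof, and is the point at which the irrationality hypothesis on $\rho(\Phi)$ is genuinely indispensable.
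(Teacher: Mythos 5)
Your proposal runs parallel to the paper's proof for most of its length: the easy direction, the verification that \emph{any} solution of (\ref{e.cohomological_equation}) turns $h(x)=x\odot\gamma(x)$ into a semiconjugacy, and the bounded, leaf-order-preserving solution $\gamma_0(x)=\limsup_{t\to\infty}(\tau_t(x)-t\rho)$ are all correct and are exactly the paper's first steps (the paper obtains $\gamma_0$ from Theorem~\ref{t.gottschalk-hedlund}, which moreover yields that $\gamma_0$ restricted to any minimal set $M$ of $\Phi$ is \emph{continuous} --- a fact you never extract and, as it turns out, cannot do without). Your intertwining relation $p\circ\Psi_t=T^{\rho}_t\circ p$ and the conclusion $p(\mathcal{G})=\Omega$ are also correct and play the role of the paper's $h(M)=\Omega$.

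The genuine gap is the step $\gamma^+=\gamma^-$, which you yourself flag as only sketched, and the sketched mechanism would fail for two concrete reasons. First, $\psi(x_0)>0$ does not produce a gap in the fibre $\mathcal{G}_{x_0}$: the closure of the graph of a discontinuous function can have entire intervals as fibres, in which case there is no gap to transport at all. Second, even where fibre gaps exist, their images under $p$ are $T^{\rho}$-invariant but not closed, and by minimality of $T^{\rho}$ a nonempty invariant set is \emph{dense} --- so no contradiction with minimality arises; note that closed $\Phi$-invariant sets are harmless too, since $\Phi$ itself is not assumed minimal. The paper's own proof of Corollary~\ref{c.minimalsets} illustrates exactly this phenomenon: the leaf-gaps of the complement of a minimal set project to a dense invariant set, each gap being collapsed by $h$ to a single point, and no contradiction comes from minimality alone but from a countability count. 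What actually closes the continuity argument in the paper is machinery absent from your scheme: one fixes a minimal set $M$ of $\Phi$, uses continuity of $\gamma_{0}|_M$ (from Gottschalk--Hedlund) and $h(M)=\Omega$ to get the plateau identity (\ref{e.plateau}) and $\gamma(x)=\gamma(x^{\pm})+t^{\pm}(x)$ as in (\ref{eq:3}), where the first hitting times $t^{+}$, $t^{-}$ of $M$ along the leaf are bounded and respectively lower and upper semicontinuous; combining this with the order inequality $t+\gamma(x\odot t)\geq\gamma(x)$ squeezes every subsequential limit of $\gamma(x_n)$ between $\gamma(x)$ and $\gamma(x)$. Your construction uses minimality of $T^{\rho}$ only through $p(\mathcal{G})=\Omega$ and has no continuity anchor anywhere (the semicontinuity of $\gamma^{\pm}$ points the wrong way for this), so the transverse-continuity step cannot be completed as described; to repair it you would essentially have to re-import the paper's $M$-based hitting-time argument.
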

A further immediate consequence we obtain from the proof in Section~\ref{Proof}
is the following.
\begin{cor} \label{c.minimalsets}
  Under the assertions of Theorem~\ref{t.gotthead} the flow $\Phi$ has
  a unique minimal set.
\end{cor}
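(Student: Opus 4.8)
The plan is to use the semi-conjugacy provided by Theorem~\ref{t.gotthead} and to exploit that an order-preserving, leaf-preserving factor map onto a \emph{minimal} system cannot collapse every fibre. Write $T:=T^{\rho(\Phi)}$ for the (minimal) translation flow and let $h(x)=x\odot\gamma(x)$ be the semi-conjugacy from Theorem~\ref{t.gotthead}, which is continuous, onto, order-preserving on the leaves $\varphi_\R(x)$, and satisfies $h\circ\Phi_t=T_t\circ h$. Minimal sets exist by compactness of $\Omega$ together with Zorn's Lemma, so the only issue is uniqueness. First I would record two elementary facts. Since $h$ intertwines $\Phi$ and $T$, for any minimal set $M$ the image $h(M)$ is a non-empty closed $T$-invariant set, hence $h(M)=\Omega$ by minimality of $T$. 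Secondly, since $h$ moves points only along their own leaf, each fibre $C_z:=h^{-1}(z)$ is contained in the leaf through $z$, and by the order-preservation it is a \emph{closed sub-arc} of that leaf.

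The heart of the argument is to show that at least one fibre is a single point. Fix a leaf and parametrise it by $s\mapsto x_0\odot s$; along it $h$ is given by $g(s)=s+\gamma(x_0\odot s)$. Because $\gamma$ is continuous on the compact space $\Omega$ it is bounded, so $g(s)-s$ is bounded and $g$ is a continuous, non-decreasing surjection of the leaf onto itself. Such a map cannot have all of its level sets non-degenerate: otherwise the level sets would be countably many pairwise disjoint non-degenerate intervals, forcing the range of $g$ to be countable and contradicting its surjectivity onto the (uncountable) leaf. Hence there is a value $z$ with $C_z=g^{-1}(z)$ a singleton. Let $N:=\{z\in\Omega: C_z \text{ is a single point } x_z\}$, which is therefore non-empty.

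Next I would use singleton fibres to pin down the minimal set. If $z\in N$ and $M$ is any minimal set, then $z\in h(M)=\Omega$ forces the unique preimage $x_z$ to lie in $M$; thus $\{x_z:z\in N\}\subseteq M$ for \emph{every} minimal set $M$. Moreover $\Phi_t(C_z)=C_{T_t z}$, so $N$ is $T$-invariant and $\Phi_t(x_z)=x_{T_t z}$, which shows that $\{x_z:z\in N\}$ is $\Phi$-invariant. Consequently its closure $M_0$ is a non-empty, closed, $\Phi$-invariant set contained in every minimal set. Finally, if $M$ is minimal then $M_0\subseteq M$ together with minimality of $M$ yields $M=M_0$. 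As this holds for every minimal set, $M_0$ is the unique minimal set.

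The step I expect to be the main obstacle is the non-collapsing claim that $N\neq\emptyset$: it is exactly here that both the order-preservation and the boundedness of $\gamma$ (that is, bounded mean motion) are indispensable, since they force the leaf-wise maps $g$ to be surjective with bounded displacement and hence genuinely injective somewhere. Care is also needed to treat periodic (circle) leaves on the same footing as non-compact ones and to confirm that $h$ restricts to a degree-one, hence surjective, self-map of each leaf; once surjectivity of $g$ is in hand, the countability argument closes the gap and the rest is formal.
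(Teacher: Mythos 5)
Your proof is correct, and it is organized differently from the paper's. The paper argues by contradiction with two minimal sets: if $M_1$ and $M_2$ are distinct they are disjoint, and since $h$ collapses each of the at most countably many segments of $\varphi_\R(x)\cap M_1^c$ to a point --- this is the plateau property (\ref{e.plateau}) established \emph{inside} the proof of Theorem~\ref{t.gotthead}, relative to the minimal set $M_1$ --- the set $h(\varphi_\R(x)\cap M_2)$ would be countable, contradicting the fact that $h(M_2)=\Omega$ forces $h(\varphi_\R(x)\cap M_2)=\varphi_\R(x)$. You use the same two engines ($h(M)=\Omega$ for every minimal set $M$, plus a countability argument for monotone leaf maps), but deploy them in the opposite order: you first show, independently of any minimal set, that $h$ has singleton fibres --- the non-degenerate fibres of the leafwise map $g$ are pairwise disjoint arcs, hence countably many, while $g$ is onto because $\gamma$ is bounded, so the set of values with non-degenerate fibre cannot exhaust the uncountable leaf --- and then observe that the unique preimage $x_z$ of such a point lies in \emph{every} minimal set, so two distinct (hence disjoint) minimal sets are impossible. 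This buys two things: your argument uses only the \emph{statement} of Theorem~\ref{t.gotthead} (continuity of $\gamma$, order preservation on leaves, surjectivity of $h$) rather than reusing (\ref{e.plateau}) from its proof, and it is constructive, identifying the unique minimal set as $M_0=\overline{\{x_z : z\in N\}}$; the paper's proof is shorter but parasitic on the theorem's proof. Your final $M_0$ step is slightly redundant --- once a single $z\in N$ exists, any two minimal sets both contain $x_z$ and therefore coincide --- though it is a harmless bonus that identifies the minimal set. One degenerate case deserves a word (the paper glosses over it too): if a leaf is a single fixed point of $\varphi$, as in the Gottschalk--Hedlund specialisation $\varphi_t=\Id_\Omega$, the uncountability argument is vacuous, but then $C_z\subseteq\varphi_\R(z)=\{z\}$ is automatically a singleton, so $N\neq\emptyset$ still holds; on non-degenerate leaves your treatment, including the periodic case via monotone degree-one circle maps, is sound.
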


The application of these results to the different examples mentioned above will
be discussed in detail in Section~\ref{Applications}. We close the introduction
with a remark on (\ref{e.f_preserves_order}) and the order-preservation on the
leaves.
\begin{rem} \label{r.order} We call $\varphi_\R(x)$ the {\em leaf} of $x$ and
  say the leaf is periodic if there exists $t_0>0$ with
  $\varphi_{t_0}(x)=x$. The natural order on $\R$, respectively $\R/t_0\Z$,
  induces a canonical order on $\varphi_\R(x)$. If the leaf is aperiodic,
  that is, $\varphi_t(x)\neq x$ for all $t\neq 0$, then this order is linear and
  given by $x\leq y :\equi y=x\odot t$ for some $t\geq 0$. When the leaf is
  periodic with period $t_0$, then the order is circular and is declared by
  $x\leq y \leq z$ being equivalent to the existence of $0\leq s\leq t \leq t_0$
  with $y=x\odot s$ and $z=x\odot t$ ($y$ lies between $x$ and $z$ in positive
  direction).  In both cases, (\ref{e.f_preserves_order}) implies that the flow
  $\Phi$ preserves the order on the leaves, that is, $x\leq y\leq z$ implies
  $\Phi_t(x)\leq \Phi_t(y) \leq \Phi_t(z) $ for all $t\in\T$.

  Given $y\in\varphi_\R(x)$, we write $[x,z]=\{y\in\varphi_\R(x) \mid x\leq
  y\leq z\}$ to denote intervals in the leaves with respect to the canonical
  ordering.
\end{rem}

\section{Proof of Theorem~\ref{t.gotthead}}
\label{Proof}
Due to (\ref{e.tau-property}) the function $\tau$ is a cocycle
over the flow $\Phi$, that is,
\begin{equation}
  \label{eq:4}
  \tau_{s+t}(x) \ = \ \tau_s(\Phi_t(x)) + \tau_t(x)\ . 
\end{equation}
By (\ref{e.bmm}) and the Gottschalk-Hedlund Theorem
\begin{equation}
  \label{e.gamma_def}
  \gamma(x) \ = \ \limsup_{t\to\infty}(\tau_t(x) -t\rho )\ 
\end{equation}
defines a bounded solution $\gamma :\Omega\to \R$ of the cohomological
equation (\ref{e.cohomological_equation}), and the restriction of
$\gamma$ to any minimal set $M$ is continuous. As a consequence,  the map
$h(x) \ = \ x\odot\gamma(x)$ satisfies
\begin{equation}
  \label{e.semi-conjugacy}
  h \circ \Phi_t \ = \ T^{\rho(\Phi)}_t\circ h \quad \forall t\in\T.
\end{equation}
 Moreover, (\ref{e.tau-property}) and the fact that $\Phi$ preserves the order 
on leaves imply that
 \begin{equation}
  \label{e.gamma_preserves_order}
  t+\gamma(x\odot t) \ \geq \ \gamma(x) \ ,
\end{equation}
which means that $h$ also preserves the order on leaves (see Remark~\ref{r.order}). It
remains to show that $\gamma$ is continuous on all of $\Omega$. 

In order to do so, fix any minimal set $M$ and let $\rho=\rho(\Phi)$.
Since $\gamma_{|M}$ is continuous, $h_{|M}$ is continuous as well. Due
to (\ref{e.semi-conjugacy}) the set $h(M)$ is $T^\rho$-invariant, such
that by $(\varphi,\omega)$-irrationality of $\rho$ we have $h(M)=\Omega$.  Given
$x\in\Omega$, we let ${\cal R}_M(x) = \{t\in\R \mid x\odot t\in M\}$.
Since $\gamma$ is bounded and $h$ maps $x\odot \R \cap M$ onto $x\odot
\R$, the set ${\cal R}_M(x)$ is relatively dense and the size of its maximal gap is
at most $2\sup_{x\in\Omega}|\gamma(x)|$. Given $x\in M$, define
\begin{eqnarray}
  \label{eq:1}
  t^+(x) \ = \ \inf\{t\geq 0 \mid x\odot t\in M\} & \quad   , \quad &  t^-(x) \ 
  = \ \sup\{t\leq 0 \mid x\odot t \in M\} \\
  x^+  \ = \ x\odot t^+(x) & \quad \mathrm{and}\quad  & x^- \ = \ x\odot t^-(x) \ .
\end{eqnarray}
Then $t^+$ and $t^-$ are bounded and due to the compactness of $M$ the
function $t^+$ is lower semi-continuous and $t^-$ is upper
semi-continuous. Furthermore, since $h$ preserves the order on the
leaves and maps $x\odot \R \cap M$ onto $x\odot \R$, it must be constant
on the interval $[x^-,x^+]$, that is,
\begin{equation}
\label{e.plateau}
h(x^-)=h(x^+)=h(x) = h(x') \qquad \forall x'\in[x^-,x^+] \ .
\end{equation}
In particular, this means that 
\begin{equation}
  \label{eq:3}
  \gamma(x) \ = \ \gamma(x^\pm) + t^\pm(x) \ .
\end{equation}
Fix $x\in \Omega$ and suppose $x'_n$ is a sequence in $\Omega$
converging to $x$. Let $x_n$ be a subsequence of $x_n'$ such that
$\gamma(x_n)$ converges. Let $t^+_n=t^+(x_n)$ and $t^-_n=t^-(x_n)$.
Taking a subsequence again if necessary, we may assume that $t^+_n\to
s^+$, $t^-_n\to s^-$, $x^+_n\to y^+$ and $x^-_n\to y^-$ as
$n\to\infty$. We have $y^+=x\odot s^+$, $y^-=x\odot s^-$ and, since
$x^\pm_n\in M$, the points $y^+$ and $y^-$ also belong to $M$. By
definition of $t^\pm$, we therefore have $s^+\geq t^+$ and $s^-\leq
t^-$. Further
\begin{equation}
  \label{eq:2}
  \nLim \gamma(x_n) \ \stackrel{(\ref{eq:3})}{=} \ \nLim \gamma(x^+_n) +
  t^+_n \ = \ \gamma(y^+) + s^+ \ = \
  \gamma(x\odot s^+) + s^+ \ \geq \ \gamma(x) \ . 
\end{equation}
In the same way we obtain $\nLim \gamma(x_n) = \gamma(y^-)+s^- \leq
\gamma(x)$, which implies $\nLim\gamma(x_n) = \gamma(x)$. This
argument shows that any convergent subsequence of $\gamma(x'_n)$ has
limit $\gamma(x)$, and since $\gamma$ is bounded we conclude $\nLim
\gamma(x_n')=\gamma(x)$.  Since $x$ and $x_n'$ were arbitrary, this
shows the continuity of $\gamma$. \qed\medskip
 
\proof[\bf Proof of Corollary~\ref{c.minimalsets}.] Suppose for a contradiction
that $M_1$ and $M_2$ are two different minimal sets of $\Phi$. Then $M_1\cap
M_2=\emptyset$ by minimality, such that $M_2\ssq M_1^c$. As $M_1$ is compact,
the set $\varphi_\R(x)\cap M_1^c$ consists of an at most countable union of open
segments in $\varphi_\R(x)$, each of which is mapped to a single point by $h$
(see~(\ref{e.plateau})). However, this means that $h(\varphi_\R(x)\cap M_2)$ is
at most countable, contradicting the fact that $h(\varphi_\R(x)\cap
M_2)=\varphi_\R(x)$ since $h(M_2)=\Omega$. \qed

\section{Two structure results}
\label{StructureResults}

In this section, we study the structure of the joint action of $\varphi$ and
$\omega$ and the implications for $\Phi$. In particular, we concentrate on some
situations where the general setting introduced above reduces to a simpler
one. As a standing assumption, we assume the minimality of the joint action of
$\varphi$ and $\omega$. Note that otherwise no $(\varphi,\omega)$-irrational numbers can
exist.

Our aim is to show that if a periodic leaf exists, then $\Phi$ `almost'
reduces to a skew product circle flow.
\begin{prop}\label{p.periodic}
  Suppose that the joint action of $\varphi$ and $\omega$ is minimal, and that
  $\varphi$ has a periodic leaf. Then there exists an $\omega$-minimal set $W$
  and a skew product flow $\Psi$ on $W\times\kreis$ which is a finite-to-one
  extension of the flow $\Phi$ defined in (\ref{e.fdefinition}). 

  Further, there exists a continuous lift $\widehat\Psi:W\times\R\to W\times\R$
  of $\Psi$ whose displacement function
  $\tilde\tau_t(x,s)=\pi_2(\widehat\Psi_t(x,s))-s$ is a scalar multiple of
  $\tau\circ g \circ p$, where $g:W\times\T^1\to \Omega$ is the finite-to-one
  factor map and $p:W\times\R\to W\times\T^1$ is the canonical projection.
\end{prop}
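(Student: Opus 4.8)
The plan is to upgrade the hypothesis ``$\varphi$ has a periodic leaf'' to the far stronger statement that $\varphi$ is a \emph{free} circle action, and then to model $\Omega$ on a (finite quotient of a) trivial circle bundle carrying the skew product. For the reduction, fix $x_0$ and $t_0>0$ with $\varphi_{t_0}(x_0)=x_0$ and set $A=\{x\in\Omega\mid\varphi_{t_0}(x)=x\}$. This set is closed, trivially $\varphi$-invariant, and $\omega$-invariant because $\varphi_{t_0}(\omega_s(x))=\omega_s(\varphi_{t_0}(x))$ by \eqref{e.omega_preserves_leaves}; being nonempty and invariant under the joint action, minimality forces $A=\Omega$. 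Hence every leaf is periodic with period dividing $t_0$. The minimal period $p(x)$ is lower semicontinuous, and the same commutation argument shows it is constant along $\omega$-orbits and along leaves, hence along the dense joint orbits; combining lower semicontinuity with density of orbits in both time directions yields a single common minimal period $c>0$. Thus $\varphi$ descends to a free continuous action of $\kreis=\R/c\Z$, so $\Omega$ is the total space of a circle bundle $\pi:\Omega\to B$ over the leaf space $B=\Omega/\varphi$, and $\omega$ induces a flow $\bar\omega$ on $B$ whose orbits are the $\pi$-images of the joint orbits; in particular $\bar\omega$ is minimal.

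Next I would construct $W$ and the factor map $g$, distinguishing two cases. If $\omega$ admits a \emph{proper} minimal set $W\ssq\Omega$, take it directly: then $\varphi_\R(W)$ is closed and invariant under the joint action, hence equals $\Omega$, so $W$ meets every leaf. For $w\in W$ the return set $R(w)=\{s\in\R/c\Z\mid w\odot s\in W\}$ is closed, is constant along $\omega$-orbits by \eqref{e.omega_preserves_leaves}, and has closed graph; since every orbit in a minimal set is dense in both directions, $R$ is globally constant and a short computation shows it is closed under the circle addition, hence a closed subgroup of $\R/c\Z$. It cannot be all of $\R/c\Z$ (otherwise $W$ is $\varphi$-saturated, hence $=\Omega$, contradicting properness), so it is a finite group, and $g\colon W\times\kreis\to\Omega$, $(w,\theta)\mapsto w\odot(c\theta)$, is exactly $|R|$-to-one. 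If instead $\omega$ is minimal on $\Omega$ (so no proper $W\ssq\Omega$ exists, as for an irrational linear flow on $\T^2$), one models on the leaf space: pass to a finite cover $q\colon W\to B$ over which the pulled-back bundle $q^*\Omega$ is trivial, giving $q^*\Omega\cong W\times\kreis$ and a natural bundle map $g\colon W\times\kreis\to\Omega$ of degree $\deg q$, with $W$ carrying the lifted minimal flow.

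It then remains to transport $\Phi$ to the model and read off the displacement. Writing points as $g(w,\theta)=w\odot(c\theta)$, definition \eqref{e.fdefinition} shows that $\Phi$ lifts to
\[
\Psi_t(w,\theta)\ =\ \bigl(\,\omega_t(w),\ \theta+\tfrac1c\,\tau_t(g(w,\theta))\,\bigr),
\]
where $\omega_t(w)$ denotes the base flow on $W$. That $\Psi$ is a genuine flow and that $g\circ\Psi_t=\Phi_t\circ g$ both follow directly from the cocycle identity \eqref{e.tau-property} for $\tau$ together with \eqref{e.omega_preserves_leaves}; since the base factor is independent of $\theta$, $\Psi$ is a skew product over the minimal base flow on $W$ with circle fibres, and by construction a finite-to-one extension of $\Phi$. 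Finally, lifting the fibre from $\kreis$ to $\R$ gives a continuous lift $\widehat\Psi_t(w,s)=(\omega_t(w),\,s+\tilde\tau_t(w,s))$ with $\tilde\tau_t(w,s)=\tfrac1c\,\tau_t(g(p(w,s)))$, that is, $\tilde\tau$ is precisely the asserted scalar multiple of $\tau\circ g\circ p$, the scalar $1/c$ arising from normalising the leaf period $c$ to circumference one.

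The main obstacle is the second step, and specifically its degenerate case: passing from the abstract circle bundle $\Omega\to B$ to a concrete trivialisation $W\times\kreis$ with $g$ finite-to-one. When $\omega$ has a proper minimal set in $\Omega$ the return-set argument settles finiteness cleanly and $W$ is an honest $\omega$-minimal subset; but when $\omega$ is minimal on $\Omega$ one must instead produce a finite cover of the leaf space that trivialises the bundle and verify that the number of sheets is genuinely finite. This is the only point where the global topology of the foliation enters — the reduction to a free circle action and the routine verification of the skew-product and cocycle identities being comparatively mechanical once the bundle model is in hand.
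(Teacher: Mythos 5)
Your case (a) --- where $\omega$ admits a proper minimal set $W$ --- is correct and is essentially the paper's own proof in different clothing: your return group $R$ is the paper's invariant group $H_W$ of Lemma~\ref{p.H_W} read modulo the leaf period (its finiteness corresponding to $H_W$ being a lattice, and your ``$|R|$-to-one'' count corresponding to the partition $\{W\odot t\,:\,t\in[0,s_0)\}$ of $\Omega$); your map $g(w,\theta)=w\odot c\theta$ is the paper's $\hat g(x,s)=x\odot t_0 s$, and your skew product with displacement $\tfrac1c\,\tau\circ g\circ p$ is exactly the paper's $\widehat\Psi_t(x,s)=(\omega_t(x),s+t_0^{-1}\tau_t(x\odot t_0 s))$. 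Your preliminary upgrade to a \emph{free} circle action with common minimal period $c$ is harmless but unnecessary: the paper works with the common, not necessarily minimal, period $t_0$ of Lemma~\ref{l.periodic_orbit}, and the finiteness of the fibres of $g$ comes from the lattice structure of $H_W$, not from freeness (the statement only asks for \emph{a} scalar multiple, so $1/c$ versus $1/t_0$ is immaterial).

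The genuine gap is your case (b), and it cannot be closed along the lines you propose. First, the existence of a finite cover of the leaf space trivialising the circle bundle is asserted, not proved, and is false for circle bundles in general: the obstruction is the Euler class, which need not die in any finite cover. Moreover, even when such a cover exists, your $W$ is a cover of the leaf space, not an $\omega$-minimal subset of $\Omega$, so you would be proving a different statement, and the lifted flow on the cover need not be minimal. Second, and more decisively, in this regime the conclusion as stated is unattainable by \emph{any} argument: take $\Omega=\T^2$, let $\varphi$ be the vertical (period-one) translation flow, $\omega$ a totally irrational linear flow and $\tau\equiv 0$; then the joint action is minimal, all leaves are periodic, and $\omega$ is minimal on $\Omega$, so the only $\omega$-minimal set is $W=\Omega$, and a finite-to-one continuous surjection $W\times\kreis=\T^3\to\T^2$ does not exist, since closed finite-to-one maps cannot lower topological dimension (Hurewicz). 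It is only fair to add that the paper's proof has the same blind spot: from $W\odot t_0=W$ it concludes that ``$H_W$ is a lattice $s_0\Z$'', silently discarding the possibility $H_W=\R$, which by Lemma~\ref{p.H_W} is precisely your case (b). So your instinct that this degenerate case is the crux is sound, but the honest resolution is that Proposition~\ref{p.periodic} tacitly assumes $H_{\varphi,\omega}\neq\R$ (equivalently, that $\omega$ is not minimal on all of $\Omega$); under that assumption your case (a) alone completes the proof, coinciding with the paper's.
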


Note that the second part of the statement implies that for all questions
related to the translation number, one may equivalently consider $\Psi$ instead of $\Phi$.
\smallskip

In order to prove the above statement, we start with some general
considerations. First, given any closed set $W$, we define its invariant group
$H_W$ as
\[H_W :=\{ \rho \in \R \mid W \odot \rho = W \}.\] From the continuity of
$\varphi$, it is clear that $H_W$ is a closed subgroup of $\R$. This means that
$H_W$ is either trivial, $\R$, or a lattice in $\R$.
\begin{lem}
  \label{p.H_W}
Suppose that the joint action of $\varphi$ and $\omega$ is minimal. Then there
exists a closed subgroup $H = H_{\varphi,\omega}$ of $\R$ 
such that $H_W = H$ for all (non-empty) $\omega$-minimal sets $W$. 
\end{lem}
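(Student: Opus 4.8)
The plan is to exploit the classification of closed subgroups of $\R$ recalled just above (trivial, a lattice, or all of $\R$), together with the observation that the flow $\varphi$ permutes the $\omega$-minimal sets. Since $\varphi_\rho$ commutes with $\omega$, it conjugates the $\omega$-action to itself, so each $\varphi_\rho$ is a homeomorphism carrying $\omega$-minimal sets to $\omega$-minimal sets. Thus $\R$ acts on the family of $\omega$-minimal sets by $W\mapsto W\odot\rho$, and by definition $H_W$ is exactly the stabiliser of $W$ under this action. The whole statement will follow once I show that, in the relevant cases, this action is transitive, since then all stabilisers of an abelian group action coincide.

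The crucial step is the following dichotomy. First I would show that if $H_W\neq\{0\}$ for some $\omega$-minimal set $W$, then $\varphi_\R(W)=\bigcup_{t\in\R}W\odot t$ is closed, and therefore, being invariant under both $\varphi$ and $\omega$, equals $\Omega$ by the standing minimality assumption. If $H_W=\R$ this is immediate, since then $\varphi_\R(W)=W$. If $H_W=c\Z$ is a lattice, then $W\odot c=W$ gives $\varphi_\R(W)=\varphi_{[0,c]}(W)$, which is the continuous image of the compact set $[0,c]\times W$ and hence compact. Once $\varphi_\R(W)=\Omega$, every point of $\Omega$ lies on some translate $W\odot t$; since distinct $\omega$-minimal sets are disjoint, any $\omega$-minimal set $W'$ that meets a translate $W\odot t_0$ must coincide with it, and as each of its points lies on such a translate this gives $W'=W\odot t_0$, which is exactly transitivity. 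The coset computation $W'\odot\rho=W'\Leftrightarrow W\odot(t_0+\rho)=W\odot t_0\Leftrightarrow W\odot\rho=W$, obtained by applying the bijection $\varphi_{-t_0}$, then yields $H_{W'}=H_W$.

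Assembling these pieces: if every $\omega$-minimal set has trivial invariant group, I simply set $H=\{0\}$ and there is nothing more to prove. Otherwise some $W$ has $H_W\neq\{0\}$, and the previous paragraph shows that every $\omega$-minimal set is a $\varphi$-translate of this $W$ and shares its invariant group $H:=H_W$; in particular no minimal set can then carry a trivial group, so the trivial and non-trivial situations cannot mix. The step I expect to be the main obstacle is precisely the closedness claim in the non-trivial case, that is, recognising that $\varphi_\R(W)=\varphi_{[0,c]}(W)$ is compact in the lattice case and thereby upgrading mere density (which joint minimality would give directly) to the equality $\varphi_\R(W)=\Omega$. It is this compactness, powered by the period $c\in H_W$, that forces transitivity and, with it, the uniformity of $H_W$ across all $\omega$-minimal sets.
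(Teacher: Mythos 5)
Your proposal is correct and follows essentially the same route as the paper: the same trichotomy for closed subgroups of $\R$, the same compactness argument showing that $\varphi_{[0,s_0]}(W)$ (respectively $W$ itself when $H_W=\R$) is a closed, jointly invariant set equal to $\Omega$, the same identification of all $\omega$-minimal sets as $\varphi$-translates of $W$ with equal stabilisers, and the same exclusion argument in the trivial case. Your reformulation in terms of the $\R$-action on the family of $\omega$-minimal sets and its stabilisers is a clean repackaging, but not a genuinely different proof.
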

\begin{proof}
  Fix a (non-empty) $\omega$-minimal set $W$ and set $H = H_W$. We deal with the three
  possible cases for $H$ separately.

First, suppose that $H = \R$. Then $W$ is invariant for the joint action. Since this 
action is minimal, it follows that $W = X$ and therefore there is only one
non-empty $\omega$-minimal set, from which the conclusion is direct.

Second, suppose that $H$ is a lattice, that is, $H = s_0\Z$ for some $s_0>0$. On
one hand, due to the continuity of $\varphi$ we have that 
\[Y =\cup_{t\in[0,s_0)}W \odot t\] 
is closed. Since it is also invariant for the
joint action, it follows that $Y = X$. On the other hand, it is easy to see that
$W \odot t$ is an $\omega$-minimal set for every $t\in\R$. This implies that
$t-s\in s_0\Z$ whenever $W \odot t \cap W \odot s$ is not empty. It follows that
$\{W \odot t\,:\,t\in[0,s_0)\}$ is a partition of $X$ by $\omega$-minimal sets.
In particular, $H_W=s_0\Z$ for all $\omega$-minimal sets $W$. 

Finally, if $H_W = \{0\}$, then for every $\omega$-minimal set $W'$ we have
$H_{W'}=\{0\}$. Indeed, if we suppose otherwise, then by the above
$H_W$ is either $\R$ or a lattice, which is absurd.
\end{proof}

We now focus on the case where $H$ is a lattice, that is, $H=s_0\Z$. In this
case, it turns out that $\varphi$ is conjugate to a suspension flow, which we
define as follows. Let $\sigma:W\to W,\ x\mapsto x\odot s_0$. Note that $\sigma$
is continuous and commutes with $w|_W$. Let $\sim_\sigma$ the smallest
equivalence relation on $W\times\R$ that satisfies
$(x,s+ks_0)\sim_\sigma(\sigma^k(x),s) \ \forall (x,s)\in W\times\R,\ k\in\Z$. We
denote by $[x,s]_\sigma$ the equivalence class of $(x,s)\in W\times\R$ with
respect to $\sim_\sigma$ and let $W_\sigma=W\times\R/\sim_\sigma$ be the
{\em suspension space}. Then we define the {\em suspension flow} $\varphi^{W,\sigma}$ on
$W_\sigma$ by $\varphi^{W,\sigma}_t[x,s]_\sigma = [x,s+t]_\sigma$.

Now, it is easy to check that the map $h:W_\sigma\to\Omega,\ h([x,s]_\sigma) =
x\odot s$ is a homeomorphism. Further, we have $h\circ
\varphi^{W,\sigma}_t([x,s]_\sigma) = x\odot (s+ t) = h([x,s]_\sigma)\odot t =
\varphi_t\circ h([x,s]_\sigma)$, such that $h$ conjugates $\varphi^{W,\sigma}$
and $\varphi$. We have thus obtained the following statement.

\begin{cor} \label{c.struc_space} If $H$ is a lattice then, for every
  $\omega$-minimal set $W$, the flow $\varphi$ is conjugate to the suspension
  flow $\varphi^{W,\sigma}$ defined above. 
\end{cor}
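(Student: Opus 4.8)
The plan is to verify directly that the explicit map $h:W_\sigma\to\Omega,\ h([x,s]_\sigma)=x\odot s$ is a homeomorphism conjugating the two flows, thereby filling in the ``easy to check'' assertion of the preceding paragraph. First I would confirm that $h$ is well-defined on the quotient: the generating relation $(x,s+ks_0)\sim_\sigma(\sigma^k(x),s)$ is respected because $\sigma^k(x)\odot s=(x\odot ks_0)\odot s=x\odot(s+ks_0)$ by the flow property of $\varphi$. Continuity of $h$ is then immediate, since $(x,s)\mapsto x\odot s$ is continuous on $W\times\R$ and factors through the quotient projection $W\times\R\to W_\sigma$.

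The core of the argument is bijectivity, and here I would exploit the structural facts obtained in the proof of Lemma~\ref{p.H_W} for the lattice case $H=s_0\Z$. Surjectivity onto $\Omega$ follows from the identity $\bigcup_{t\in[0,s_0)}W\odot t=\Omega$ established there (this union is closed and invariant under the joint action, hence equals $\Omega$ by minimality). For injectivity, I would take $x\odot s=x'\odot s'$ with $x,x'\in W$ and write $s=ks_0+r$, $s'=k's_0+r'$ with $r,r'\in[0,s_0)$. Using $x\odot s=\sigma^k(x)\odot r$ places the common point in $W\odot r\cap W\odot r'$, and the fact from Lemma~\ref{p.H_W} that $\{W\odot t\mid t\in[0,s_0)\}$ is a partition of $\Omega$ then forces $r=r'$. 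Cancelling the invertible flow map $\cdot\odot r$ yields $\sigma^k(x)=\sigma^{k'}(x')$, so that $[x,s]_\sigma=[\sigma^k(x),r]_\sigma=[\sigma^{k'}(x'),r']_\sigma=[x',s']_\sigma$.

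To upgrade $h$ from a continuous bijection to a homeomorphism, I would argue by compactness: $W$ is a closed subset of the compact space $\Omega$, and $W_\sigma$ is the image of the compact block $W\times[0,s_0]$ under the quotient projection, hence compact; as $\Omega$ is compact metric and therefore Hausdorff, a continuous bijection $W_\sigma\to\Omega$ is automatically a homeomorphism. The intertwining relation is finally a one-line check: $h\circ\varphi^{W,\sigma}_t([x,s]_\sigma)=h([x,s+t]_\sigma)=x\odot(s+t)=(x\odot s)\odot t=\varphi_t\circ h([x,s]_\sigma)$.

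The step I expect to require the most care is injectivity, as this is precisely where the lattice hypothesis on $H$ enters: without the disjointness of the sheets $W\odot t$, $t\in[0,s_0)$, the map $h$ could identify distinct equivalence classes. By contrast, well-definedness, continuity, and the conjugacy identity are formal consequences of the flow property of $\varphi$, while the passage from continuous bijection to homeomorphism is the standard compact-to-Hausdorff argument.
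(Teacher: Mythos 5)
Your proposal is correct and takes essentially the same route as the paper: the paper's argument for this corollary is exactly the map $h([x,s]_\sigma)=x\odot s$ together with the one-line conjugacy computation, with the homeomorphism property left as ``easy to check.'' You simply supply those omitted details --- well-definedness via the flow property, surjectivity and injectivity via the partition $\{W\odot t \mid t\in[0,s_0)\}$ established in the proof of Lemma~\ref{p.H_W}, and the standard compact-to-Hausdorff upgrade from continuous bijection to homeomorphism --- all of which are sound.
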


As a consequence, we can also view the flow $\Phi$ as a flow on the suspension
space $W_\sigma$.  Namely, let $\tilde\Phi : W_\sigma\times\R \to W_\sigma,\
\tilde\Phi_t([x,s]_\sigma) = [\omega(x),s+\tau_t(x\odot s)]_\sigma$. Then it is
easy to check that $h$ defined above conjugates $\tilde\Phi$ and $\Phi$. 
\begin{cor}
\label{c.struct_map}
If $H$ is a lattice, then the flow $\Phi$ from (\ref{e.fdefinition}) is
conjugate to the flow $\tilde \Phi$ on the suspension space $W_\sigma$.
\end{cor}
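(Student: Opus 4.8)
The plan is to verify directly that the homeomorphism $h:W_\sigma\to\Omega$, $h([x,s]_\sigma)=x\odot s$, established before Corollary~\ref{c.struc_space}, intertwines $\tilde\Phi$ and $\Phi$. Once this is done the conjugacy is immediate, and the flow and continuity properties of $\tilde\Phi$ come for free from those of $\Phi$. Note first that the defining formula $\tilde\Phi_t([x,s]_\sigma)=[\omega_t(x),\,s+\tau_t(x\odot s)]_\sigma$ does land in $W_\sigma$, since $\omega_t$ maps the $\omega$-minimal set $W$ onto itself. The argument then splits into a well-definedness check, the intertwining computation, and a short formal conclusion.

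First I would confirm that $\tilde\Phi$ is well defined on the quotient, i.e.\ that the formula does not depend on the chosen representative of $[x,s]_\sigma$. Since the representatives of a given class are exactly the pairs $(\sigma^k(x),s-ks_0)$ with $k\in\Z$, it suffices to evaluate the right-hand side there. Using $\sigma^k(x)=x\odot ks_0$ and the flow property of $\odot$ gives $\sigma^k(x)\odot(s-ks_0)=x\odot s$, so the $\tau$-term is unchanged; using in addition the commutation relation~(\ref{e.omega_preserves_leaves}) gives $\omega_t(\sigma^k(x))=\omega_t(x)\odot ks_0=\sigma^k(\omega_t(x))$. The resulting pair $\bigl(\sigma^k(\omega_t(x)),\,(s+\tau_t(x\odot s))-ks_0\bigr)$ is $\sim_\sigma$-equivalent to $\bigl(\omega_t(x),\,s+\tau_t(x\odot s)\bigr)$, which is the value obtained from the representative $(x,s)$. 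This is the only step that requires genuine care, so I would regard it as the main (if modest) obstacle.

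Second I would carry out the intertwining computation. On one side, $h\circ\tilde\Phi_t([x,s]_\sigma)=\omega_t(x)\odot\bigl(s+\tau_t(x\odot s)\bigr)$ by the definitions of $\tilde\Phi$ and $h$. On the other side, $\Phi_t\circ h([x,s]_\sigma)=\Phi_t(x\odot s)=\omega_t(x\odot s)\odot\tau_t(x\odot s)$ by~(\ref{e.fdefinition}); applying~(\ref{e.omega_preserves_leaves}) in the form $\omega_t(x\odot s)=\omega_t(x)\odot s$ and then the flow property $(y\odot s)\odot r=y\odot(s+r)$ turns this into $\omega_t(x)\odot\bigl(s+\tau_t(x\odot s)\bigr)$ as well. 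Hence $h\circ\tilde\Phi_t=\Phi_t\circ h$ for every $t$.

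Finally, since $h$ is a homeomorphism the intertwining relation rewrites as $\tilde\Phi_t=h^{-1}\circ\Phi_t\circ h$. This exhibits $\tilde\Phi$ as the conjugate of $\Phi$ by $h$, so each $\tilde\Phi_t$ is continuous and the family $(\tilde\Phi_t)$ inherits $\tilde\Phi_0=\Id$ and $\tilde\Phi_{t+t'}=\tilde\Phi_t\circ\tilde\Phi_{t'}$ from $\Phi$; in particular $\tilde\Phi$ is a flow and $h$ is a conjugacy between $\Phi$ and $\tilde\Phi$, as claimed.
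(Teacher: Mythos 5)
Your proposal is correct and follows exactly the route the paper takes: the paper simply defines $\tilde\Phi_t([x,s]_\sigma)=[\omega_t(x),s+\tau_t(x\odot s)]_\sigma$ and asserts that ``it is easy to check'' that the homeomorphism $h([x,s]_\sigma)=x\odot s$ from Corollary~\ref{c.struc_space} conjugates $\tilde\Phi$ and $\Phi$. You have merely written out that easy check in full --- the well-definedness on the quotient via $\sigma^k(x)\odot(s-ks_0)=x\odot s$ and the commutation relation~(\ref{e.omega_preserves_leaves}), and the intertwining computation --- all of which is accurate.
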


We now turn to the situation where $\varphi$ has a periodic leaf. In this case,
all leaves are periodic.
\begin{lem}
\label{l.periodic_orbit}
Suppose that the joint action of $\varphi$ and $\omega$ is minimal and $\varphi$
has a periodic leaf. Then all the leaves of $\varphi$ are periodic with the
same period.
\end{lem}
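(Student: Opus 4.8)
The plan is to exploit the commutation relation \eqref{e.omega_preserves_leaves} together with the minimality of the joint action, applied to suitably defined closed invariant sets of periodic points. Write $p>0$ for a period of the given periodic leaf, so that there is some $x\in\Omega$ with $x\odot p=x$.

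First I would show that \emph{every} leaf is periodic with period dividing $p$. To this end, consider the set $F_p=\{y\in\Omega\mid y\odot p=y\}$. It is closed by continuity of $\varphi_p$ and contains $x$, hence is non-empty. It is invariant under $\varphi$, since $y\odot p=y$ gives $(y\odot t)\odot p=(y\odot p)\odot t=y\odot t$, and it is invariant under $\omega$, since by \eqref{e.omega_preserves_leaves} we have $\omega_s(y)\odot p=\omega_s(y\odot p)=\omega_s(y)$ whenever $y\in F_p$. As $F_p$ is thus a non-empty closed set invariant under the joint action, minimality forces $F_p=\Omega$. Consequently $y\odot p=y$ for every $y\in\Omega$, so every leaf is periodic and its group of periods $P_y=\{t\in\R\mid y\odot t=y\}$ contains $p$.

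Next I would pin down the minimal period. The set of $\varphi$-fixed points $\{y\mid y\odot t=y\ \forall t\in\R\}$ is closed and, by the same commutation argument, invariant under the joint action; by minimality it is empty or all of $\Omega$. In the degenerate case that it equals $\Omega$ the flow $\varphi$ is trivial and the assertion is vacuous, so assume it is empty. Then for each $y$ the group $P_y$ is a non-trivial closed proper subgroup of $\R$ containing $p$, hence $P_y=c(y)\Z$ with a well-defined minimal period $c(y)>0$ dividing $p$. The key analytic point is that $c$ is lower semi-continuous: if $y_n\to y$ and $c(y_n)\to a$ (the values $c(y_n)$ being bounded by $p$), then passing to the limit in $y_n\odot c(y_n)=y_n$ and using continuity of $\varphi$ yields $y\odot a=y$, so $c(y)\le a$; thus $c(y)\le\liminf_{n}c(y_n)$.

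Finally, lower semi-continuity on the compact space $\Omega$ guarantees that $c$ attains its infimum $p_0:=\min_{y}c(y)$, and $p_0>0$ because there are no fixed points. Applying the first step with $p_0$ in place of $p$, the set $F_{p_0}=\{y\mid y\odot p_0=y\}$ is again non-empty (it contains a minimiser $y_0$), closed and invariant under the joint action, so $F_{p_0}=\Omega$. Hence $p_0\in P_y$ and therefore $c(y)\le p_0$ for every $y$, while $c(y)\ge p_0$ by the choice of $p_0$; thus $c(y)=p_0$ for all $y\in\Omega$ and all leaves share the common period $p_0$. I expect the main obstacle to be the verification of lower semi-continuity of the minimal-period function and, relatedly, the exclusion of fixed points, since this is what makes $p_0$ strictly positive and lets the clean closed-invariant-set argument deliver a single common period rather than merely a common divisor.
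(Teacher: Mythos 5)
Your first step is precisely the paper's proof in a different guise: the paper fixes a period $t$ of the given leaf, notes via \eqref{e.omega_preserves_leaves} that every point $\omega_{t_k}(x)\odot s_k$ of the joint orbit of $x$ is $t$-periodic, and then uses minimality plus continuity of $\varphi$ to conclude $y\odot t=y$ for all $y\in\Omega$; your formulation with the closed, jointly invariant set $F_p$ is the same argument packaged as a closed-invariant-set statement. The paper then dispatches the ``same period'' claim in a single sentence, so your second step, which pins down a common \emph{minimal} period, is actually more careful than the source.

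That second step, however, has a hole exactly where you anticipated trouble: your lower semi-continuity argument is incomplete. From $y_n\odot c(y_n)=y_n$ you obtain $y\odot a=y$ in the limit, but this yields $c(y)\le a$ only when $a>0$; since $c(y_n)\in\{p/m \mid m\in\N\}$, the limit $a=0$ is a priori possible, and then $y\odot 0=y$ carries no information. The gap is fixable within your framework: if $c(y_n)\to 0$, write any $t\ge 0$ as $t=k_n c(y_n)+r_n$ with $0\le r_n<c(y_n)$, so that $y\odot t=\lim_n y_n\odot t=\lim_n y_n\odot r_n=y\odot 0=y$, making $y$ a $\varphi$-fixed point and contradicting your case assumption. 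But in fact the whole semi-continuity detour is unnecessary: your own first step, applied with $c(y)$ in place of $p$, already gives $F_{c(y)}=\Omega$ for \emph{every} $y\in\Omega$, hence $c(y)\in P_z=c(z)\Z$ for all $y,z$, so $c(z)$ divides $c(y)$ and, by symmetry, $c$ is constant. This one-line bootstrap is what the paper's terse closing remark (``from this, we easily obtain that all leaves have the same period'') is gesturing at, and it renders both the minimiser $p_0$ and its attainment superfluous.
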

\begin{proof}
  Suppose $t$ is the period of a leaf $\varphi_\R(x)$, such that $x\odot t =
  x$. Since $\omega$ commutes with $\varphi$, it is clear that $
  \omega_s(x)\odot t = \omega_s (x)$. Hence, $\omega$ sends periodic leaves to
  periodic leaves. Fix any $y\in X$. Due to the minimality of the joint action
  of $\varphi$ and $\omega$ there exist sequences $(t_k)_{k\in\N}$ and
  $(s_k)_{k\in\N}$ in $\T$ such that $y_k=\omega_{t_k}(x)\odot s_k$ converges to
  $y$. Since all $y_k$ are $t$-periodic, it follows that $y\odot t =
  \lim_{k\to\infty}y_k\odot t = \lim_{k\to\infty}y_k = y$. From this, we easily
  obtain that all leaves have the same period.
\end{proof}

\begin{proof}[Proof of Proposition~\ref{p.periodic}]
  Fix an $\omega$-minimal set $W$ and let $s_0,\ \sigma$ and $W_\sigma$ be as
  above. By Lemma \ref{l.periodic_orbit}, all the orbits of $\varphi$ are
  periodic with period $t_0$. In particular we have $W \odot t_0 = W$, such that
  $H_W$ is a lattice $s_0\Z$ and $t_0 = k s_0$ for some $k\in\Z$. Consequently,
  $\sigma^k$ is the identity map on $W$. 

  Now we define a flow $\widehat \Psi$ on $W\times\R$ by $\widehat\Psi_t(x,s) =
  (\omega_t(x),s+t_0^{-1}\tau_t(x\odot t_0s))$. Further, we let $\hat g :
  W\times\R\to\Omega$ be given by $g(x,s) = x\odot t_0s$. Then
  \begin{eqnarray*}
    \hat g \circ \widehat \Psi_t(x,s) & = & \omega_t(x)\odot (t_0s+\tau_t(x\odot t_0s)) \\
    &=& \omega_t(x\odot t_0s) \odot \tau_t(x\odot t_0s) \\ & = & \Phi_t(x\odot t_0s)
    \ = \ \Phi_t\circ \hat g(x,s) \ . 
  \end{eqnarray*}
  Hence, $\hat g$ is a semiconjugacy from $\widehat\Psi$ to $\Phi$. Furthermore,
  $\widehat \Psi$ projects to a skew product circle flow $\Psi$ on
  $W\times\kreis$ and $\hat g$ projects to a map $g:W\times\kreis\to\Omega$. It
  is easy to check that this map $g$ is a finite-to-one factor map (or
  semiconjugacy) from $\Psi$ to $\Phi$. The fact that the displacement function
  of the lift $\hat\Psi$ has the required form follows immediately from the
  above definition.
\end{proof}
A second situation in which the general setting simplifies is when there exists
an invariant leaf.
\begin{prop} \label{p.trivial-omega}
  Suppose that the joint action of $\varphi$ and $\omega$ is minimal
  and that there is a $\varphi$-orbit that is invariant by $\omega$.
  Then there exists $\rho\in\R$ such that $\omega_t(x) = x \odot t\rho$
  for all $x\in X$ and $t\in\T$.
\end{prop}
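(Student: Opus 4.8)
The plan is to first use the invariant leaf to pin down the behaviour of $\omega$ on that single leaf, and then to spread this information over all of $X$ by a closed-invariant-set argument. Fix $x_0$ whose leaf $L=\varphi_\R(x_0)$ is $\omega$-invariant. Since $\omega$ commutes with $\varphi$, the two flows jointly define a continuous action of the abelian group $G=\R\times\T$ on $X$ via $(u,t)\cdot x=\omega_t(x)\odot u$, and this action is minimal by hypothesis. Invariance of $L$ means exactly that $\omega_t(x_0)\in L$ for all $t$, so the $G$-orbit of $x_0$ equals $L$. The object to study is the stabiliser $\Gamma=\{(u,t)\in G \mid \omega_t(x_0)\odot u=x_0\}$, which is a closed subgroup of $G$, being the preimage of $\{x_0\}$ under the continuous orbit map $g\mapsto g\cdot x_0$.

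Writing $\omega_t(x_0)=x_0\odot a(t)$ (with $a(t)\in\R$ when $L$ is aperiodic, and defined modulo the period otherwise), invariance of $L$ shows that $\Gamma$ surjects onto the second factor $\T$, since $(-a(t),t)\in\Gamma$ for every $t$. In the discrete-time case $\T=\Z$ this already suffices: choosing $(-\rho,1)\in\Gamma$ gives $\omega_1(x_0)=x_0\odot\rho$, and induction via $\omega_1(x_0\odot c)=\omega_1(x_0)\odot c$ yields $\omega_n(x_0)=x_0\odot n\rho$. In the continuous-time case $\T=\R$ I would invoke the classification of closed subgroups of $\R^2$ applied to $\Gamma$. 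Its identity component $\Gamma_0$ is a closed connected subgroup, hence $\{0\}$, a line, or $\R^2$. It is not $\R^2$, because $\Gamma\cap(\R\times\{0\})$ equals only the period lattice of $L$ (cf.\ Lemma~\ref{l.periodic_orbit}) and is thus proper; and it is not $\{0\}$, since a discrete subgroup of $\R^2$ cannot surject onto the connected group $\R$. So $\Gamma_0$ is a line, and as its projection to the second factor is a connected subgroup of $\R$ that is nonzero (otherwise $\Gamma_0$ would fill $\R\times\{0\}$, again contradicting properness), it is all of $\R$. Hence $\Gamma_0=\{(-t\rho,t)\mid t\in\R\}$ for a unique $\rho$, and reading off membership gives $\omega_t(x_0)=x_0\odot t\rho$ for all $t$.

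To finish I would propagate this identity to every point. Let $S=\{y\in X\mid \omega_t(y)=y\odot t\rho \text{ for all } t\in\T\}$. It contains $x_0$ and is closed, being an intersection over $t$ of equalisers of the continuous maps $y\mapsto\omega_t(y)$ and $y\mapsto y\odot t\rho$. Moreover $S$ is invariant under the joint action: if $y\in S$ then $\omega_t(y\odot u)=\omega_t(y)\odot u=(y\odot u)\odot t\rho$ and $\omega_t(\omega_s(y))=\omega_{s+t}(y)=\omega_s(y)\odot t\rho$, so both $y\odot u$ and $\omega_s(y)$ lie in $S$. By minimality of the joint action $S=X$, which is precisely the assertion.

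The delicate step is the middle one, extracting $\omega_t(x_0)=x_0\odot t\rho$ on the invariant leaf. The difficulty is that the parametrisation $u\mapsto x_0\odot u$ of a possibly dense leaf need not be a homeomorphism onto its image, so one cannot argue directly that the displacement $a(t)$ varies continuously, let alone linearly, with $t$. The way around this is to work inside $G$ rather than on the leaf: the stabiliser $\Gamma$ is automatically closed, and this single regularity property, together with the classification of closed subgroups of $\R^2$ (respectively $\R\times\Z$), is exactly what forces the displacement to be linear.
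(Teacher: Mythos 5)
Your proof is correct, and it reaches the conclusion by a genuinely different route than the paper. The paper works pointwise in $t$: for each $t$ it picks some displacement $t\rho(t)$ with $\omega_t(x_0)=x_0\odot t\rho(t)$ on the invariant leaf, transports the identity $\omega_t(y)=y\odot t\rho(t)$ to every $y\in X$ by approximating $y$ with points $\omega_{t_k}(x_0)\odot s_k$ (minimality) and passing to the limit, and only afterwards recovers linearity of $t\mapsto t\rho(t)$ from the flow property: a ``compatibility check'' (additivity) forces $\rho(t)$ to be constant over rational resp.\ integer $t$, and continuity of $\omega$ handles the remaining real $t$. You invert this order: you first prove linearity on the invariant leaf itself, by observing that the stabiliser $\Gamma\leq\R\times\T$ of $x_0$ under the joint action is automatically closed, surjects onto the $\T$-factor, and hence --- by the classification of closed subgroups of $\R^2$ --- contains a line $\{(-t\rho,t)\mid t\in\R\}$ as identity component; only then do you globalise, via the equaliser set $S$, which is the cleaner closed-invariant-set formulation of the same minimality argument the paper runs with sequences. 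What your approach buys: the closedness of $\Gamma$ substitutes for any continuity of the displacement $t\mapsto a(t)$ (which, as you note, is not available when the leaf is dense), and it absorbs the period lattice of a periodic leaf automatically, whereas in the paper's argument $\rho(t)$ is only defined modulo the period there and the ``simple compatibility check'' quietly leans on this; the cost is the appeal to subgroup classification, which the paper's more elementary argument avoids. One loose end: both of your properness claims for $\Gamma\cap(\R\times\{0\})$ silently assume the invariant leaf is not a single $\varphi$-fixed point (in which case the period group is all of $\R$). That degenerate case is harmless --- if $x_0\odot u=x_0$ for all $u$ and the leaf is $\omega$-invariant, the orbit of $x_0$ under the joint action is $\{x_0\}$, so minimality forces $X=\{x_0\}$ and any $\rho$ works --- but since nothing in the hypotheses formally excludes it, you should dispose of it explicitly before invoking properness.
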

\begin{proof}
  Suppose $\varphi_\R(x)$ is $\omega$-invariant. Given $t\in\T$ there
  exist $\rho(t)\in\R$ such that $\omega_t(x) = x \odot t\rho(t)$. Due the minimality
  of the joint action, for every $y\in X$ we can find sequences
  $(t_k)_{k\in\N}\subset\R$ and $(s_k)_{k\in\N}\subset\Z$ such that
  $\lim_{k\rightarrow\infty} y_k = \lim_{k\rightarrow\infty} \omega_{t_k}(x)\odot s_k = y$. It
  follows from the continuity of $\omega$ that $\omega_t(y_k)$ converges to
  $\omega_t(y)$. Since the actions of $\omega$ and $\varphi$ commute, we deduce
  that
  \begin{eqnarray*}
    \omega_t(y) & = & \lim_{k\to\infty}\omega_t(y_k) \\ & = & \lim_{k\to\infty}
    \omega_{t_k}(\omega_t(x))\odot s_k \ =  \lim_{k\to\infty}(\omega_{t_k}(x)\odot
     t\rho(t))\odot s_k \\ & = & \lim_{k\to\infty} \left(\omega_{t_k}(x)\odot s_k\right)
     \odot t\rho(t) \ = \ y\odot t\rho(t) \ . 
  \end{eqnarray*}
  The above equality holds for all $y\in \Omega$, such that for all $t\in\T$
  there exists a unique $\rho(t)$ with $\omega_t(y)=y\odot t\rho(t) \ \forall
  y\in\Omega$. A simple compatibility check yields that $\rho(t)$ is the same
  for all rational (or integer) $t\in\T$. If $\T=\Z$ this completes the proof,
  otherwise the statement follows from the continuity of $\omega$.
\end{proof}

\section{Applications} \label{Applications}

Before we turn to more sophisticated applications, let us indicate how the two
classical examples mentioned in the introduction fit into this framework. For
the Gottschalk-Hedlund Theorem, we choose $\T=\Z$ and $\varphi_t=\Id_\Omega \
\forall t\in\T$.  Thus $\Phi=\omega$, and in particular $\Phi$ is independent of
$\tau$. This implies that every $\rho\in\R$ is $(\varphi,\omega)$-irrational provided that
$\omega$ is minimal. Hence, in this particular case, we see that Theorem
\ref{t.gotthead} is equivalent to Gottschalk-Hedlund Theorem.

In order to obtain part (ii) of the Poincar\'e Classification Theorem we let
$\Omega=\kreis$, $\varphi_t(x)=x+t\bmod 1$ and choose $\omega$ to be trivial,
that is, $\omega_t=\Id_\Omega \ \forall t\in\T=\Z$. Further, given $t\in\Z^2$ we
choose $\tau_t(x)=F^t(x)-x$, where $F:\R\to\R$ is a lift of the
orientation-preserving circle homeomorphism $f$. Then $\Phi$ and $T^{\rho}$ with
$\rho=\rho(\Phi)=\rho(F)$ are given by their time-one maps $\Phi_1=f$ and
$T^\rho_1 = R_\rho$. Further, $|F^t(x)-x-t\rho|\leq 1 \ \forall x\in\R,\ t\in\Z$
(see, for example, \cite{katok/hasselblatt:1997}). Hence, when $\rho$ is
irrational, such that $T^\rho$ is minimal, then Theorem~\ref{t.gotthead}
provides the required semiconjugacy from $f$ to $R_\rho$. \medskip

We now want to apply Theorem~\ref{t.gotthead} to some important
classes of examples. The skew products in Section~\ref{SkewProducts}
correspond to the case of periodic leafs described in
Proposition~\ref{p.periodic}. The examples from
Section~\ref{Aperiodic} all have in common that the transversal flow
$\omega$ is trivial, such that leafs are preserved one-by-one as in
Proposition~\ref{p.trivial-omega}. Only in Section~\ref{APspacetime}
we then make full use of the generality of (\ref{e.fdefinition}), by
considering systems that have both non-trivial transversal action and
non-periodic leafs.

\noindent 
\subsection{Skew products circle flows over a minimal base}\label{SkewProducts}
Suppose $\Omega_0$ is a compact metric space and
$\tilde\omega:\T\times\Omega_0\to\Omega_0$ a minimal flow with time $\T=\Z$ or
$\R$. Let $\Omega=\Omega_0\times\kreis$ and write $x=(x_0,x')\in\Omega$. We
consider skew product flows
\begin{equation}\label{e.skew-product}
\Phi:\T\times \Omega\to\Omega
\quad , \quad \Phi_t(x_0,\bar x) \ = \ (\tilde\omega_t(x_0),\Phi^{x_0}_t(\bar x))\ 
\end{equation}
 over a {\em base flow} $\omega$. We say $\widehat\Phi$ is a lift of $\Phi$ if it
is a continuous map
\begin{equation}\label{e.skew-product-lift}\widehat\Phi:
 \T\times\Omega_0\times\R\to\Omega_0\times\R\quad ,\quad
(t,x_0,x')\mapsto (\tilde\omega_t(x_0),\widehat\Phi^{x_0}_t(x'))\ , 
\end{equation}
that satisfies $\widehat\Phi^{x_0}_t(x') \bmod 1 = \Phi^{x_0}_t(x'\bmod 1)$. 
\begin{cor}\label{c.skew-products}
  Suppose $\Phi$ is a skew product flow of the form (\ref{e.skew-product}) with
  lift $\widehat\Phi$ and the product flow $T^\rho : (t,x_0,x') \mapsto
  (\tilde\omega_t(x_0),x'+t\rho\bmod 1)$ is minimal. Then $\Phi$ is semiconjugate to
  $T^\rho$ if and only if
  \begin{equation}\label{e.bmm-skewproducts}
     \sup_{t\in\T, x\in\Omega} \left|\widehat\Phi_t^{x_0}(x')-x'-t\rho\right| \
     \leq \ \infty \ .
   \end{equation}
 \end{cor}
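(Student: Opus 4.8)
The plan is to realize Corollary~\ref{c.skew-products} as a special case of Theorem~\ref{t.gotthead} by identifying the skew-product data with the abstract data $(\varphi,\omega,\tau)$. First I would set $\varphi_t(x_0,\bar x)=(x_0,\bar x+t\bmod 1)$, so that the leaves $\varphi_\R(x)=\{x_0\}\times\kreis$ are exactly the circle fibres and are all periodic with period $1$. I would take the transversal action to be the base flow, $\omega_t(x_0,\bar x)=(\tilde\omega_t(x_0),\bar x)$, and one checks immediately that $\omega$ commutes with $\varphi$ since the two act on different coordinates, so~(\ref{e.omega_preserves_leaves}) holds. The translation function is then read off from the lift: define $\tau_t(x_0,\bar x)=\widehat\Phi^{x_0}_t(x')-x'$ for any lift $x'$ of $\bar x$, which is well defined modulo the fibre identification precisely because $\widehat\Phi$ is a lift. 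With these choices the abstract flow~(\ref{e.fdefinition}) becomes $\Phi_t(x_0,\bar x)=\omega_t(x_0,\bar x)\odot\tau_t(x_0,\bar x)=(\tilde\omega_t(x_0),\bar x+\tau_t\bmod 1)=(\tilde\omega_t(x_0),\widehat\Phi^{x_0}_t(x')\bmod 1)$, which is exactly~(\ref{e.skew-product}).

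Next I would verify the two hypotheses needed to invoke Theorem~\ref{t.gotthead}. The cocycle identity~(\ref{e.tau-property}) for $\tau$ follows from the flow property of $\widehat\Phi$, i.e.\ from $\widehat\Phi^{\tilde\omega_t(x_0)}_s\circ\widehat\Phi^{x_0}_t=\widehat\Phi^{x_0}_{s+t}$, by substituting the definition of $\tau$ and unwinding the fibre shifts; this is a routine telescoping computation. For the $(\varphi,\omega)$-irrationality of $\rho$, I would observe that the translation flow~(\ref{e.translation}) here reads $T^\rho_t(x_0,\bar x)=\omega_t(x_0,\bar x)\odot t\rho=(\tilde\omega_t(x_0),\bar x+t\rho\bmod 1)$, which is literally the product flow $(t,x_0,x')\mapsto(\tilde\omega_t(x_0),x'+t\rho\bmod 1)$ appearing in the hypothesis. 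Hence the assumed minimality of the product flow is exactly $(\varphi,\omega)$-irrationality of $\rho$, and the abstract translation number $\rho(\Phi)$ agrees with $\rho$.

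Finally I would translate the boundedness criteria. Bounded mean motion~(\ref{e.bmm}) for this $\tau$ says $\sup_{t,x}|\tau_t(x)-t\rho|<\infty$, and substituting $\tau_t(x_0,\bar x)=\widehat\Phi^{x_0}_t(x')-x'$ gives precisely~(\ref{e.bmm-skewproducts}). (I would note in passing that the paper's display~(\ref{e.bmm-skewproducts}) has the harmless typo $\le\infty$ where $<\infty$ is meant.) Theorem~\ref{t.gotthead} then yields that the cohomological equation has a continuous solution $\gamma$ if and only if bounded mean motion holds, and in that case $h(x)=x\odot\gamma(x)$ is a semiconjugacy from $\Phi$ to $T^\rho$; since $T^\rho$ is the product flow, this is exactly the asserted equivalence.

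The main obstacle, and really the only point requiring care rather than bookkeeping, is the order-preservation condition~(\ref{e.f_preserves_order}): the abstract theorem is stated for flows preserving the leaf order, whereas a general skew product need \emph{not} be fibrewise monotone. I would therefore emphasise that the corollary is asserted for skew products given in lifted form, where each fibre map $\widehat\Phi^{x_0}_t$ is an orientation-preserving circle homeomorphism, so that~(\ref{e.f_preserves_order}) reduces to the monotonicity of $x'\mapsto\widehat\Phi^{x_0}_t(x')$ combined with the cocycle relation; this is the one hypothesis I would make sure is explicitly in force before applying Theorem~\ref{t.gotthead}, as the rest of the argument is a direct specialisation.
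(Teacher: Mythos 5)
Your proposal is correct and is essentially the paper's own proof, which consists of exactly the same one-line identification $\varphi_t(x_0,x')=(x_0,x'+t\bmod 1)$, $\omega_t(x_0,x')=(\tilde\omega_t(x_0),x')$, $\tau_t(x_0,x')=\widehat\Phi^{x_0}_t(x')-x'$ followed by an appeal to Theorem~\ref{t.gotthead}; you have merely written out the routine verifications the paper leaves implicit. Your closing remark that fibrewise orientation-preservation of $\widehat\Phi^{x_0}_t$ is tacitly needed for~(\ref{e.f_preserves_order}) (and for the well-definedness of $\tau$ on the quotient) is a fair observation about a hypothesis the paper does not spell out, but it does not change the argument.
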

 \proof It suffices to note that $\Phi$ is of the form (\ref{e.fdefinition})
 with $\varphi_t(x_0,x') = (x_0,x'+t\bmod 1)$,
 $\omega_t(x_0,x')=(\tilde\omega_t(x_0),x')$ and $\tau_t(x_0,x') =
 \widehat\Phi_t^{x_0}(x')-x'$.  \qed\medskip

 We note that so far the result was only known in the case where $\omega_0$ is
 an irrational rotation of the circle \cite{jaeger/stark:2006}. The proof given
 in \cite{jaeger/stark:2006} can be extended to irrational rotations of
 higher-dimensional tori, but does not carry over to the general situation
 described here.\medskip

Corollary~\ref{c.minimalsets} further implies that under the hypotheses of
Corollary~\ref{c.skew-products} there is a unique minimal sets. However, in this
case a result by Huang and Yi yields the uniqueness of the minimal set also when
mean motion is unbounded, that is, when (\ref{e.bmm-skewproducts})
fails \cite[Theorem 7]{huang/yi:2007}. This leads to the following
\begin{cor}
  Suppose $\Phi$ is a skew product flow of the form (\ref{e.skew-product}) and
  the product flow $T^\rho : (t,x_0,x') \mapsto
  (\tilde\omega_t(x_0),x'+t\rho\bmod 1)$ is minimal.  Then $\Phi$ has a unique
  minimal set.
\end{cor}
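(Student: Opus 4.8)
The plan is to argue by a dichotomy according to whether the mean motion of $\Phi$ is bounded, that is, whether condition (\ref{e.bmm-skewproducts}) holds or fails. In both cases one concludes that there is exactly one minimal set, but the two cases draw on entirely different tools.

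First I would treat the case where (\ref{e.bmm-skewproducts}) holds. As noted in the proof of Corollary~\ref{c.skew-products}, the skew product $\Phi$ fits into the general framework (\ref{e.fdefinition}) via $\varphi_t(x_0,x')=(x_0,x'+t\bmod 1)$, $\omega_t(x_0,x')=(\tilde\omega_t(x_0),x')$ and $\tau_t(x_0,x')=\widehat\Phi^{x_0}_t(x')-x'$, where $\widehat\Phi$ is a continuous lift of $\Phi$. Under this identification, minimality of the product flow $T^\rho$ is precisely the $(\varphi,\omega)$-irrationality of $\rho=\rho(\Phi)$, and (\ref{e.bmm-skewproducts}) is exactly the bounded-mean-motion condition (\ref{e.bmm}). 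Hence the hypotheses of Theorem~\ref{t.gotthead} are met, and Corollary~\ref{c.minimalsets} applies to yield that $\Phi$ has a unique minimal set.

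Second, I would deal with the complementary case in which (\ref{e.bmm-skewproducts}) fails. Here the semiconjugacy $h$ furnished by Theorem~\ref{t.gotthead} need no longer exist, so Corollary~\ref{c.minimalsets} is unavailable and a genuinely different argument is required. Instead I would invoke the result of Huang and Yi \cite[Theorem 7]{huang/yi:2007}, which guarantees a unique minimal set for skew product circle flows over a minimal base whenever the associated product flow $T^\rho$ is minimal, with no boundedness assumption whatsoever. The point to verify is that the standing hypotheses of that theorem coincide with ours: the base flow $\tilde\omega$ is minimal by assumption, and minimality of $T^\rho$ is exactly the condition under which their statement applies. Combining the two cases then completes the proof.

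I expect the only real obstacle to be the bookkeeping needed to confirm that the framework of Huang and Yi matches ours verbatim, in particular that their formulation of the relevant minimality condition agrees with minimality of $T^\rho$ and that their statement does not tacitly impose extra regularity on the fibre maps $\Phi^{x_0}_t$. Once this identification is secured, no further computation is required and the result follows by the dichotomy above.
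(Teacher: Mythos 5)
Your proposal is correct and follows essentially the same route as the paper: the text preceding this corollary argues precisely by your dichotomy, using Corollary~\ref{c.minimalsets} (via the identification from Corollary~\ref{c.skew-products}) when the bounded mean motion condition (\ref{e.bmm-skewproducts}) holds, and invoking \cite[Theorem 7]{huang/yi:2007} when it fails. Your closing caveat about matching the Huang--Yi framework is reasonable, but the paper makes the same appeal without further verification, so there is no divergence in substance.
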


Finally, we note that the uniqueness of the rotation number of skew product
flows depends on the unique ergodicity of the base flow.
\begin{thm}[\cite{herman:1983}] \label{t.herman}
  Suppose $\Phi$ is a skew product circle flow of the form
  (\ref{e.skew-product}) and the base flow $\omega$ is uniquely ergodic. Then
  the limit
  \[
    \rho(\hat\Phi) \ = \ \lim_{t\to\infty} \Phi_t^{x_0}(x')/t \ 
  \]
  exists and does not depend on $x_0\in\Omega_0$ or $x'\in\R$.
\end{thm}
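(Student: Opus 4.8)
The plan is to reduce the statement to a subadditive ergodic theorem over the base, exploiting the order structure of orientation-preserving circle maps. Following Corollary~\ref{c.skew-products}, I introduce the displacement function $\tau_t(x_0,x')=\widehat\Phi^{x_0}_t(x')-x'$. Because $\widehat\Phi$ is a lift of the fibre-preserving flow $\Phi$, the function $\tau$ satisfies the cocycle identity $\tau_{s+t}(x)=\tau_s(\Phi_t(x))+\tau_t(x)$, exactly as in (\ref{eq:4}). Each fibre map $\widehat\Phi^{x_0}_t$ is the lift of an orientation-preserving circle homeomorphism (being isotopic to the identity inside the flow), hence it commutes with the unit translation and is increasing; consequently $\tau_t(x_0,\cdot)$ is $1$-periodic and descends to a continuous function on $\Omega=\Omega_0\times\kreis$, and its oscillation along each fibre obeys $\max_{x'}\tau_t(x_0,x')-\min_{x'}\tau_t(x_0,x')\le 1$ for all $t$ and $x_0$.

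Next I would set $\phi_t(x_0)=\max_{x'}\tau_t(x_0,x')$ and $\psi_t(x_0)=\min_{x'}\tau_t(x_0,x')$, which are continuous on the compact base and satisfy $0\le\phi_t-\psi_t\le 1$. The structural heart of the argument is that $\phi$ is subadditive and $\psi$ superadditive over the base flow $\tilde\omega$. Indeed, writing $\tau_{s+t}(x_0,x')=\tau_s(\tilde\omega_t(x_0),\widehat\Phi^{x_0}_t(x'))+\tau_t(x_0,x')$ and using that $\widehat\Phi^{x_0}_t$ maps the fibre \emph{onto} itself, maximising over $x'$ gives $\phi_{s+t}(x_0)\le\phi_s(\tilde\omega_t(x_0))+\phi_t(x_0)$, and minimising gives the reversed inequality for $\psi$.

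Finally I would invoke the uniform subadditive ergodic theorem for uniquely ergodic systems. Let $\nu_0$ be the unique invariant measure of $\tilde\omega$ and put $\rho=\lim_t\frac1t\int\phi_t\,d\nu_0$ and $\rho'=\lim_t\frac1t\int\psi_t\,d\nu_0$. Unique ergodicity forces the one-sided uniform bounds $\limsup_{t\to\infty}\phi_t(x_0)/t\le\rho$ and $\liminf_{t\to\infty}\psi_t(x_0)/t\ge\rho'$ for \emph{every} $x_0\in\Omega_0$. Since $|\phi_t-\psi_t|\le 1$, the two integrals differ by at most $1$, so $\rho=\rho'$. The sandwich $\psi_t(x_0)\le\tau_t(x_0,x')\le\phi_t(x_0)$ then yields $\tLim\tau_t(x_0,x')/t=\rho$ for all $(x_0,x')$, which is the assertion (with $\rho(\widehat\Phi)=\rho$).

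The hard part is precisely the all-points convergence in the last step, and this is the only place where unique ergodicity — rather than mere ergodicity — enters. Kingman's theorem alone would give convergence of $\phi_t/t$ only $\nu_0$-almost everywhere, and on a null set of fibres the limit can genuinely fail to exist; it is unique ergodicity that promotes this to the uniform one-sided estimates above. A technical caution in the continuous-time case ($\T=\R$) is that one must apply the subadditive theorem directly to the flow $\tilde\omega$, since the time-one map of a uniquely ergodic flow need not be uniquely ergodic; in the discrete case ($\T=\Z$) the map $\tilde\omega_1$ is uniquely ergodic by hypothesis and the argument applies verbatim.
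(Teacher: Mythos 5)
Your proof is correct, but note first that the paper itself contains no proof of Theorem~\ref{t.herman}: the result is quoted from Herman \cite{herman:1983}, so the comparison is with Herman's classical argument rather than with anything in this text. Herman proceeds through invariant measures of the skew product: any weak-$*$ limit of empirical measures along an orbit is a $\Phi$-invariant measure $\mu$ which, by unique ergodicity, projects to the unique base measure $\nu_0$; the cocycle identity gives $\int\tau_t\,d\mu=t\int\tau_1\,d\mu$, and the fibrewise oscillation bound $|\tau_t(x_0,x')-\tau_t(x_0,y')|\leq 1$ forces $\int\tau_1\,d\mu$ to take the same value for all invariant $\mu$, whence every limit point of $\tau_t(x)/t$ equals this common value, for every $x$. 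You replace this with the sub-/superadditive envelopes $\phi_t=\max_{x'}\tau_t$ and $\psi_t=\min_{x'}\tau_t$ over the base together with the uniform one-sided subadditive bound for uniquely ergodic systems, and all the steps check out: the cocycle identity, the subadditivity of $\phi$ and superadditivity of $\psi$, the Fekete limits, $\rho=\rho'$ from $0\leq\phi_t-\psi_t\leq 1$, and the final sandwich. Your two technical cautions are exactly the right ones — Kingman alone gives only $\nu_0$-a.e.\ convergence, and unique ergodicity upgrades this only to a \emph{one-sided} uniform estimate (which your sandwich then makes two-sided); and since the time-one map of a uniquely ergodic flow need not be uniquely ergodic, the continuous-time case must be handled at the level of the flow, e.g.\ by the standard chain-decomposition-and-averaging argument. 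One small gloss: the $1$-periodicity of $\tau_t(x_0,\cdot)$ and the oscillation bound require the fibre lifts to be increasing and to commute with $x'\mapsto x'+1$; in discrete time this does not follow from an isotopy argument but is implicit in the paper's standing order-preservation hypothesis (\ref{e.f_preserves_order}), so you should state it as an assumption rather than derive it. Comparing the two routes: Herman's is more elementary (weak-$*$ compactness plus Birkhoff, no subadditive machinery) and yields the extra identity $\rho(\widehat\Phi)=\int\tau_1\,d\mu$ for every invariant measure $\mu$; yours is self-contained modulo the Furman-type uniform subadditive theorem and transfers readily to situations where the cocycle has bounded fibrewise oscillation but an invariant-measure description of the skew product is inconvenient.
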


\subsection{Aperiodic order on the real line} \label{Aperiodic}
Let $f:\R\rightarrow\R$ be a continuous non-decreasing map. The translation
number of $f$ at $x$ is defined by
\[\rho(f,t):=\lim_{n\rightarrow\infty} \frac{f^{n}(t)-t}{n}\]
provided the limit exists. As pointed out in the introduction, in the case that $f$ is the
lift of an orientation-preserving homeomorphism of the circle, the translation
number of $f$ at $x$ exists for every $x\in\R$ and does not depend on
$x$. Notice that in this case the displacement function, which is defined as
$\phi:=f-id$ is $1$-periodic. In this section, we are interested in the case
when $\phi$ is no longer periodic but has some sort of ``quasiperiodicity''.
a
\subsubsection{Almost-periodic displacements}
We first deal with the case where $\phi$ is almost periodic.  Recall that a function $\phi$ is almost
periodic (in the sense of Bohr), if the family $\{\phi(\cdot + \tau) : \tau \in
\R\}$ has compact closure in $C(\R)$ endowed with the topology of uniform
convergence. We write $X_\phi$ for the closure and $\xi_t = \phi(\cdot + t)$ for
all $t\in\R$. There is a natural multiplication defined on $X_\phi$. Namely if
$\xi = \lim_{n} \xi_{t_n}$ and $\xi' = \lim_{n} \xi_{s_n}$ belong to $X_\phi$,
then
\[\xi \cdot \xi':=\lim_{n} \xi_{t_n+s_n}\]
is well-defined and $(X_\phi,\cdot)$ is a compact Abelian group (see, for
instance, \cite{ellis:1969} for details) with identity $\xi_0$. Note that
$\xi\cdot\xi_t(\ldot)=\xi(\ldot+t)$, and this defines a natural flow
$\phi:(t,\xi)\mapsto \xi\cdot\xi_t=:\xi\odot t$ on $X_\phi$. 

The map $f:\R\rightarrow\R$ given by $f(x)=x+\phi(x)$ can now be studied as
follows. We let $\tau:X_\phi\to\R,\ \xi \mapsto\xi(0)$ and define
$F:X_\phi\rightarrow X_\phi$ by
\begin{equation}\label{e.ap-hullmap}
F(\xi)\ =\ \xi \odot \tau(\xi) \ .
\end{equation}
Then
$F(\xi_t)=\xi_t\odot\xi_t(0)=\xi_t\odot\phi(t)=\xi_{f(t)}$. Consequently
$h:\R\to X_\phi,\ t\mapsto\xi_t$ satisfies $h\circ
f(t)=\xi_{f(t)}=F(\xi_t)=F\circ h(t)$, such that $f$ can be identified with
$F_{|\{\xi_t\mid t\in\R\}}$. In this setting, uniqueness of the rotation number
was shown by Kwapisz.
\begin{thm}[\cite{kwapisz:2000}]
\label{thm:KwapiszUnique}
If $f$ is orientation preserving and $\phi$ is bounded away from zero, then $F$
has a unique translation number $$\rho(F)\ = \ \nLim \ntel\inergsum \tau\circ
F^i(\xi) \ = \ \nLim \ntel\inergsum \tau\circ f^i(t)\ ,$$ meaning that the
limits exist and are independent of $\xi\in X_\phi$ and $t\in\R$.
\end{thm}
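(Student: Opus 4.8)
The plan is to establish the existence and independence of the translation number $\rho(F)$ for the hull map $F$ defined in (\ref{e.ap-hullmap}). The key structural fact is that $F$ fits into the general framework of the paper: the flow $\varphi$ on $X_\phi$ is given by $\xi \odot t = \xi(\cdot + t)$, the transversal action $\omega$ is trivial (we are in the setting of Proposition~\ref{p.trivial-omega} with $\rho = 0$, so leaves are preserved one-by-one), and the translation function is $\tau(\xi) = \xi(0)$. Since $(X_\phi, \cdot)$ is a compact Abelian group on which $\varphi$ acts by translation, and $\phi$ is almost periodic, this flow is minimal and in fact uniquely ergodic with respect to the Haar measure $\mu$ on $X_\phi$. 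The discrete-time dynamics of interest is the time-one map $F$, whose displacement along the leaf over $n$ iterates is the Birkhoff sum $\inergsum \tau \circ F^i(\xi) = F^n$-displacement $= $ the quantity whose Cesàro average we must control.

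First I would verify that $\tau$ is a continuous cocycle over $F$, i.e.\ that $\tau_n(\xi) = \inergsum \tau(F^i(\xi))$ satisfies the cocycle identity (\ref{e.tau-property}); this is immediate from the definition $F^n(\xi) = \xi \odot \tau_n(\xi)$ together with the group structure. Next I would invoke the hypothesis that $f$ is orientation-preserving and $\phi$ is bounded away from zero. Orientation-preservation gives that $f$ is strictly increasing, hence $F$ preserves the order on each leaf in the sense of Remark~\ref{r.order}, so that $\tau$ satisfies the monotonicity (\ref{e.f_preserves_order}). The condition that $\phi$ is bounded away from zero—say $\phi \geq \delta > 0$—forces $\tau(\xi) = \xi(0) \geq \delta$ for all $\xi \in X_\phi$ (by taking limits, since $\phi(\cdot + t_n) \to \xi$ uniformly and each $\phi(t_n) \geq \delta$), which guarantees that the Birkhoff sums $\tau_n(\xi)$ grow at least linearly and, crucially, that the orbit $F^n(\xi)$ moves strictly and monotonically along the leaf.

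The heart of the argument is to deduce from unique ergodicity that the Cesàro average $\ntel \tau_n(\xi) = \ntel \inergsum \tau(F^i(\xi))$ converges to the constant $\int_{X_\phi} \tau \, d\mu$, independently of $\xi$. The natural tool is a Birkhoff-type ergodic theorem for uniquely ergodic systems, which gives uniform convergence of Birkhoff averages of any continuous function to its spatial mean. The subtlety—and I expect this to be the main obstacle—is that $F$ itself need not be minimal or uniquely ergodic even though the underlying translation flow $\varphi$ is: $F$ is a \emph{reparametrisation} of the flow $\varphi$ along the leaves (the return time to advance by the group translation is governed by $\tau$), so one cannot directly apply unique ergodicity of $\varphi$ to the discrete map $F$. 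I would handle this by passing through the suspension / special-flow picture: because $\tau \geq \delta > 0$ is a continuous positive roof function over the uniquely ergodic system $(X_\phi, \varphi)$, the induced special flow is itself uniquely ergodic, and the translation number $\rho(F)$ is recovered as the reciprocal of the mean return time $\int \tau \, d\mu$ via the standard Abramov-type relation between the special flow and its base. Concretely, monotonicity (\ref{e.f_preserves_order}) lets me sandwich $\tau_n(\xi)$ between the $\varphi$-displacements at nearby integer times and apply the uniform ergodic theorem along $\varphi$ to both bounds, squeezing out the limit.

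Finally, independence of $\xi \in X_\phi$ follows from the uniformity in the ergodic theorem together with the density of the orbit $\{\xi_t \mid t \in \R\}$ in $X_\phi$; and the identification with the average along the genuine orbit of $f$, namely $\nLim \ntel \inergsum \tau \circ f^i(t)$, follows because $\tau \circ F^i(\xi_t) = \tau(\xi_{f^i(t)}) = \phi(f^i(t)) = f^{i+1}(t) - f^i(t)$, so that $\inergsum \tau \circ f^i(t) = f^n(t) - t$ telescopes and its average is exactly the classical translation number $\rho(f,t)$ of $f$ at $t$. This telescoping identity both confirms that the two displayed limits coincide and makes transparent why the common value is $\int_{X_\phi} \tau \, d\mu$.
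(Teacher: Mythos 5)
The paper offers no proof of Theorem~\ref{thm:KwapiszUnique} --- it is imported from \cite{kwapisz:2000} --- so your argument has to stand on its own, and its central step is false. You claim the Ces\`aro averages converge to $\int_{X_\phi}\tau\,d\mu$ with $\mu$ the Haar measure. But Haar measure is invariant for the translation flow $\varphi$, not for $F$; Birkhoff averages along $F$-orbits see $F$-invariant measures, and when a limit exists it equals $\int\tau\,d\nu$ for some $F$-invariant $\nu$. The entire content of Kwapisz's theorem is that $F$ is typically \emph{not} uniquely ergodic, yet this value is the same for every $\nu$ and convergence holds at every point. Your formula is refuted inside the hypotheses, since periodic $\phi$ is a special case of almost periodic: take $f(x)=x+1+a+\eps\sin(2\pi x)$ with $0<a<\eps$ and $2\pi\eps<1$. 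Then $f$ is increasing, $\phi\geq 1+a-\eps>0$, $X_\phi\cong\T^1$ with $\mu=\mathrm{Leb}$, and the induced circle map has a fixed point (solve $\sin 2\pi x=-a/\eps$), so $\rho(F)=1$, whereas $\int_{X_\phi}\tau\,d\mu=\int_0^1\phi(t)\,dt=1+a$. Since a point-independent limit is unique, an argument that outputs $\int\tau\,d\mu$ cannot be repaired by tightening estimates; the approach itself is wrong.

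The special-flow/Abramov patch does not rescue it. The suspension of $(X_\phi,F)$ with roof $\tau$ is \emph{not} isomorphic to $(X_\phi,\varphi)$: the natural map $(\xi,s)\mapsto\xi\odot s$ is a highly non-injective semiconjugacy (in the periodic case the suspension is a flow on $\T^2$ while $(X_\phi,\varphi)$ is the circle flow), so unique ergodicity of $\varphi$ transfers no information about $F$-invariant measures; and the ``reciprocal of the mean return time'' relation applies to first-return maps to a transversal section, which $F$ is not --- $F$ moves \emph{along} the flow direction. Your sandwich step is likewise circular: the $\varphi$-displacement in time $t$ is $t$ itself, so it bounds $\tau_n$ only if you already know how many $F$-steps fit into flow-time $t$, which is the quantity to be determined. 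A workable route, in the spirit of \cite{kwapisz:2000}, uses $0<\delta\leq\tau\leq M$ (this is where ``bounded away from zero'' enters in an essential way) to introduce the counting function $N(\xi,T)=\#\{n\geq 1 \mid \tau_n(\xi)\leq T\}$, uses order preservation on the leaves (Remark~\ref{r.order}) to show that $T\mapsto N(\cdot,T)$ is an almost-additive cocycle over $\varphi$ with uniformly bounded errors, and then applies a uniform ergodic theorem for such cocycles over the uniquely ergodic flow $(X_\phi,\varphi)$ to get $N(\xi,T)/T\to c$ uniformly, hence $\tau_n(\xi)/n\to 1/c$ for all $\xi$; note $1/c$ need not equal $\int\tau\,d\mu$. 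The peripheral parts of your proposal --- the cocycle identity, leafwise monotonicity, $\tau\geq\delta$ on the hull, and the telescoping identity $\inergsum\tau\circ f^i(t)=f^n(t)-t$ reconciling the two displayed limits --- are all correct, but they do not touch the actual difficulty.
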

Note that the system obtained in (\ref{e.ap-hullmap}) can be written in the
general form (\ref{e.fdefinition}) with time $\T=\Z$ and trivial transversal
action $\omega=\Id$. It follows that Theorem \ref{t.gottschalk-hedlund} can be
applied to give a solution to the semiconjugacy problem for maps
of this type,
which was explicitly raised in \cite{kwapisz:2000}.  However, before stating the
result, we first want to understand which numbers are
$(\varphi,\omega)$-rational. 

It is well-known that every continuous almost periodic function $q$ has a mean
value
\[ M(q) = \lim_{N\rightarrow+\infty} \frac{1}{N}\int_{-N}^{N} q(t)dt.\] The
\emph{frequency module} of $\phi$ is defined to be the set
\[\mathcal{M} = \left\{\sum_{v} j_v \lambda_v, j_v\in\Z\right\},\]
where the $\lambda_v\in\R$ are the (countably many) values of $\lambda$ for
which $M(\phi e^{-i\lambda x})\neq 0$. We say $t\in\R$ is {\em rationally
  independent of } $\mathcal{M}$ if $kt\in\mathcal{M}$ with $k\in\Z$
implies $k=0$. Further, we call $\lambda\in\R$ a {\em continuous eigenvalue} of
the flow $(X_\phi,\R)$ if there exists a continuous map $g: X_\phi\to\R$ which
satisfies $g(\xi\odot t)=e^{i\lambda t}g(\xi)$ for all $\xi\in X_\phi$ and
$t\in\R$.
\begin{prop}[\cite{miller:2004}]
\label{prop.Miller}
The frequency module of $f$ coincides with the group of continuous eigenvalues
of $(X_\phi,\R)$.
\end{prop}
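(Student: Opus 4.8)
The plan is to exploit the fact that $(X_\phi,\R)$ is a minimal equicontinuous flow. Since $X_\phi$ is a compact abelian group and the orbit $\{\xi_t \mid t\in\R\}$ of the identity $\xi_0$ is dense, the flow is just the translation action $\xi\mapsto\xi\odot t=\xi\cdot\xi_t$ of the dense one-parameter subgroup $t\mapsto\xi_t$. For such a flow the continuous eigenfunctions are, up to a scalar, exactly the continuous characters of $X_\phi$, and the assignment $\chi\mapsto\lambda_\chi$ determined by $\chi(\xi_t)=e^{i\lambda_\chi t}$ is an injective homomorphism from the dual group $\widehat{X_\phi}$ onto the group $E$ of continuous eigenvalues. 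I would therefore prove the two inclusions $\mathcal M\subseteq E$ and $E\subseteq\mathcal M$ separately, using the Bohr--Fourier coefficients as the bridge between the spectral data of $\phi$ and the characters of the hull.

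For $\mathcal M\subseteq E$, the key observation is that for each $\lambda\in\R$ the Bohr--Fourier coefficient $c_\lambda(\xi)=M(\xi(\cdot)e^{-i\lambda\cdot})$ defines a continuous function on $X_\phi$: continuity follows from $|M(q)|\leq\supnorm{q}$, and invariance of the mean under translation gives the eigenfunction identity $c_\lambda(\xi\odot t)=e^{i\lambda t}c_\lambda(\xi)$. Hence $|c_\lambda|$ is flow-invariant and thus constant by minimality. For a frequency $\lambda_v$ of $\phi$ we have $c_{\lambda_v}(\xi_0)=M(\phi\, e^{-i\lambda_v\cdot})\neq 0$, so $c_{\lambda_v}$ is nowhere zero and, after normalisation, is a continuous character with eigenvalue $\lambda_v$. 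Since $E$ is a group, this shows $\mathcal M\subseteq E$.

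For the reverse inclusion I would first confirm that any continuous eigenfunction $g$ satisfying $g(\xi\odot t)=e^{i\lambda t}g(\xi)$ (necessarily complex-valued) is, after normalising $g(\xi_0)=1$, a character: its modulus is constant by minimality, and the multiplicativity $g(\xi\cdot\xi')=g(\xi)g(\xi')$ follows by checking it on the dense orbit and using that $t\mapsto\xi_t$ is a homomorphism. Thus $E\cong\widehat{X_\phi}$ via $\chi\mapsto\lambda_\chi$, and the characters whose eigenvalue lies in $\mathcal M$ are precisely the subgroup generated by the normalised $c_{\lambda_v}$. I would then show that this subgroup separates points of $X_\phi$: every element of the hull has its frequencies among the $\lambda_v$, so if two elements share all coefficients $c_{\lambda_v}$ they have identical Bohr--Fourier series and hence coincide by the uniqueness theorem for almost periodic functions. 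By Pontryagin duality a point-separating subgroup of the discrete dual $\widehat{X_\phi}$ must be all of $\widehat{X_\phi}$, whence $E\subseteq\mathcal M$, and the two groups agree.

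The main obstacle, and the step requiring the most care, is this point-separation argument together with its duality conclusion: one must verify that every element of the hull, being a uniform limit of translates of $\phi$, carries exactly the frequency set $\{\lambda_v\}$ (the modulus of each Bohr--Fourier coefficient being preserved under translation and passage to limits), and then invoke the uniqueness theorem to upgrade equality of the relevant coefficients to equality in $X_\phi$. The remaining ingredients---continuity of $c_\lambda$, constancy of invariant continuous functions on a minimal flow, and the identification of eigenfunctions with characters---are comparatively routine.
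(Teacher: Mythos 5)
Your proposal is correct, but there is nothing internal to compare it against: the paper does not prove this proposition, it imports it wholesale from Miller and Rosenblatt \cite{miller:2004}, where it drops out of a more general study of regular almost periodicity in compact minimal abelian flows. Your argument is a self-contained classical derivation tailored to hulls of Bohr almost periodic functions, and all the key steps check out: the coefficient functionals $c_\lambda(\xi)=M(\xi(\cdot)e^{-i\lambda\cdot})$ are $1$-Lipschitz in the sup norm and satisfy $c_\lambda(\xi\odot t)=e^{i\lambda t}c_\lambda(\xi)$ by translation invariance of the mean; minimality of the group translation flow makes $|c_\lambda|$ constant on $X_\phi$, which simultaneously yields $\mathcal M\subseteq E$ (nonvanishing eigenfunctions at each $\lambda_v$) and the crucial fact that every hull element has Fourier--Bohr spectrum inside $\{\lambda_v\}$; the uniqueness theorem for almost periodic functions then shows the normalised $c_{\lambda_v}$ already separate points, and since $\widehat{X_\phi}$ is discrete, the annihilator argument $H=(H^{\perp})^{\perp}=\widehat{X_\phi}$ closes the duality step, giving $E\subseteq\mathcal M$ via the injection $\chi\mapsto\lambda_\chi$. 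Two fine points you handle implicitly but should make explicit in a written version: eigenfunctions must be $\C$-valued (the paper's definition writes $g:X_\phi\to\R$, which is a typo, since $g(\xi\odot t)=e^{i\lambda t}g(\xi)$ forces $g\equiv 0$ for real $g$ and $\lambda\neq 0$), and the multiplicativity of a normalised eigenfunction uses that $\{(\xi_s,\xi_t)\mid s,t\in\R\}$ is dense in $X_\phi\times X_\phi$, which holds because a product of dense sets is dense in the product. What your route buys over the citation is explicitness --- the eigenfunctions are realised concretely as normalised Fourier--Bohr coefficients, which is exactly the kind of information useful in the paper's applications --- while the citation buys generality beyond hulls of scalar almost periodic functions.
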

\begin{lem}[{\cite[4.24.1]{Glasner}}]
\label{lem.Glasner}
Let $(X,\{T_t\}_{t\in\R})$ be a minimal compact flow. Then for every
$t\in\R\setminus\{0\}$, the system $(X,T_t)$ is not minimal if and only if there
is a continuous eigenvalue $\lambda$ of $(X,\{T_t\}_{t\in\R})$ such that
$t\lambda$ is an integer.
\end{lem}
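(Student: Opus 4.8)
The statement is an iff, so I would prove the two directions separately, and the natural strategy is to relate minimality of the time-$t$ map $(X,T_t)$ to the existence of a nontrivial continuous eigenvalue $\lambda$ with $t\lambda\in\Z$. The plan is to fix $t\in\R\setminus\{0\}$ and analyze the orbit-closure structure of the single homeomorphism $T_t$ inside the minimal flow $(X,\{T_s\}_{s\in\R})$.

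\medskip

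\emph{The easy direction ($\Leftarrow$).} Suppose there is a continuous eigenvalue $\lambda$ with eigenfunction $g:X\to\R$ (or rather into the unit circle, writing $g(\xi\odot s)=e^{i\lambda s}g(\xi)$) and with $t\lambda\in\Z$. Then along the $T_t$-orbit of any point $\xi$ we have $g(T_t^n(\xi))=e^{i\lambda t n}g(\xi)=g(\xi)$, since $\lambda t\in\Z$ forces $e^{i\lambda t n}=1$. Thus $g$ is constant on each $T_t$-orbit closure. Because $g$ is a nonconstant continuous function (nontriviality of the eigenvalue), its level sets are proper closed $T_t$-invariant subsets, so no $T_t$-orbit can be dense and $(X,T_t)$ is not minimal. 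The only point requiring a small remark here is that a continuous eigenfunction for a nonzero eigenvalue is automatically nonconstant, which follows directly from the functional equation.

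\medskip

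\emph{The hard direction ($\Rightarrow$).} Assume $(X,T_t)$ is not minimal; I must produce a continuous eigenvalue $\lambda$ with $t\lambda\in\Z$. This is the main obstacle. The idea is to build a nontrivial closed $T_t$-invariant set and exploit the extra $\R$-action to organize it. Concretely, let $Z\subsetneq X$ be a proper closed $T_t$-invariant set (e.g.\ the closure of a single non-dense $T_t$-orbit). Consider its saturation $\widetilde Z=\{T_s(\xi)\mid \xi\in Z,\ s\in\R\}$ under the full flow; by minimality of the flow and continuity this is all of $X$. The key structural fact to extract is that the family $\{T_s(Z)\mid s\in[0,t)\}$ partitions (or periodically tiles) $X$ with some period, exactly as in the lattice case of Lemma~\ref{p.H_W} in the present paper: the set of $s$ with $T_s(Z)=Z$ is a closed subgroup of $\R$ containing $t\Z$, hence of the form $s_0\Z$ with $s_0$ commensurable to $t$. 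Writing the quotient map that records which translate $T_s(Z)$ a point lies in then yields a continuous factor onto a rotation of a circle $\R/s_0\Z$, and the frequency of that rotation is the sought continuous eigenvalue $\lambda=2\pi/s_0$; the relation $T_s(Z)=Z\Leftrightarrow s\in s_0\Z$ together with $t\in s_0\Z$ gives $t\lambda\in 2\pi\Z$, i.e.\ $t\lambda\in\Z$ after the appropriate normalization of $\lambda$.

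\medskip

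The delicate steps will be (i) showing the invariance group $\{s:T_s(Z)=Z\}$ is a \emph{discrete} (hence lattice) subgroup rather than all of $\R$ or trivial—here minimality of the flow forces the saturation to be everything and rules out triviality, while properness of $Z$ rules out $s_0=0$; and (ii) verifying that the induced map to $\R/s_0\Z$ is genuinely continuous, which is where the compactness of $X$ and of $Z$ enters to guarantee that the ``return coordinate'' varies continuously. Since this is precisely the Glasner reference \cite[4.24.1]{Glasner}, I would cite it and only sketch the suspension-type argument, noting its close parallel with the lattice analysis already carried out in Lemma~\ref{p.H_W} and Corollary~\ref{c.struc_space} above.
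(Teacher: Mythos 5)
Since the paper gives no proof of Lemma~\ref{lem.Glasner} at all --- it is quoted directly from \cite[4.24.1]{Glasner} --- your sketch has to be judged on its own merits, and in outline it is the standard suspension-type argument. Your ($\Leftarrow$) direction is correct: on a minimal flow a not-identically-zero eigenfunction has constant nonzero modulus, so it may be taken unimodular; if the integrality condition makes $g\circ T_t=g$, then its level sets are proper closed $T_t$-invariant sets (proper because, as you note, $\lambda\neq 0$ forces $g$ to be nonconstant), so $T_t$ is not minimal. One remark: with the paper's own convention $g(\xi\odot s)=e^{i\lambda s}g(\xi)$, the correct condition is $t\lambda\in 2\pi\Z$; the statement $t\lambda\in\Z$ matches the convention $g(\xi\odot s)=e^{2\pi i\lambda s}g(\xi)$. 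You flag this, and the paper is equally loose about it, so it is not a real defect.

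The genuine gap is in the ($\Rightarrow$) direction: your suggested choice of $Z$, ``the closure of a single non-dense $T_t$-orbit'' (or, as you also allow, an arbitrary proper closed $T_t$-invariant set), does not support the claim that the translates $T_s(Z)$ tile $X$. The equal-or-disjoint dichotomy for translates --- which is exactly what makes the circle coordinate $x\mapsto s\bmod s_0$ well defined --- can fail for such $Z$: if $Z_0$ is $T_t$-minimal with invariance group $s_0\Z$ and $u\notin s_0\Z$, then $Z=Z_0\cup T_u(Z_0)$ is closed, proper and $T_t$-invariant, yet $T_u(Z)\cap Z\supseteq T_u(Z_0)\neq\emptyset$ while in general $T_u(Z)\neq Z$, so a point can lie in two distinct translates. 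This is precisely the point where your own template, Lemma~\ref{p.H_W}, uses that $W$ is $\omega$-\emph{minimal}, a hypothesis your sketch silently drops. The repair is short and stays within your plan: since $T_t$ is not minimal, choose $Z$ to be a \emph{proper $T_t$-minimal set} (one exists inside any proper closed invariant set by compactness and Zorn's lemma); every $T_s$ commutes with $T_t$, so each translate $T_s(Z)$ is again $T_t$-minimal, two translates are equal or disjoint, and intersecting translates give $s-s'\in H_Z$. With this choice the rest of your argument goes through: $H_Z$ is a closed subgroup of $\R$ containing $t\neq 0$; it is not all of $\R$ because $Z$ is proper while flow-minimality would force $Z=X$; hence $H_Z=s_0\Z$ with $t=ks_0$, $k\in\Z\setminus\{0\}$. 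The union $\bigcup_{s\in[0,s_0]}T_s(Z)$ is compact (continuous image of a compact set) and flow-invariant, hence equals $X$; the compactness argument you indicate then gives continuity of the induced factor map onto the rotation flow on $\R/s_0\Z$, and $\lambda=1/s_0$ (in the $e^{2\pi i\lambda s}$ convention) is a continuous eigenvalue with $t\lambda=k\in\Z$, as required.
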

This allows to characterise the $(\varphi,\omega)$-irrational times. 
\begin{prop}Suppose the flow $\Phi$ in (\ref{e.fdefinition}) is of the
  form given by (\ref{e.ap-hullmap}). Then $\rho$ is
  $(\varphi,\omega)$-irrational if and only if
 $\rho^{-1}$ is rationally independent of $\mathcal{M}$, .
\end{prop}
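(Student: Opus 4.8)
The plan is to reduce the $(\varphi,\omega)$-irrationality condition to a statement about the continuous Bohr flow $\{\varphi_t\}_{t\in\R}$ on $X_\phi$, where $\varphi_t(\xi)=\xi\odot t$, and then to apply the two cited results of Miller and Glasner. First I would observe that, since $\omega=\Id$ and $\T=\Z$, the translation flow from (\ref{e.translation}) reads $T^\rho_t(\xi)=\xi\odot t\rho=\varphi_{t\rho}(\xi)$, so the $\Z$-action $T^\rho$ is exactly the one generated by the single time-$\rho$ map $\varphi_\rho$. Hence $\rho$ is $(\varphi,\omega)$-irrational precisely when $\varphi_\rho$ is a minimal map on $X_\phi$; note that $\tau$ plays no role here, as the definition of irrationality involves only $\varphi$ and $\omega$.

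Next I would record that the continuous flow $(X_\phi,\{\varphi_t\})$ is itself minimal, so that Lemma~\ref{lem.Glasner} applies. Indeed $X_\phi$ is a compact Abelian group in which $\{\xi_t\mid t\in\R\}$ is dense by construction, and $\varphi_t$ acts by the group translation $\xi\mapsto\xi\cdot\xi_t$; since left translation by $\xi$ is a homeomorphism and $\xi\in X_\phi$, the orbit closure of any $\xi$ equals $\xi\cdot\overline{\{\xi_t\}}=\xi\cdot X_\phi=X_\phi$, whence minimality. Lemma~\ref{lem.Glasner} then tells us that $\varphi_\rho$ fails to be minimal if and only if there is a nonzero continuous eigenvalue $\lambda$ of $(X_\phi,\{\varphi_t\})$ with $\rho\lambda\in\Z$.

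At this point I would invoke Proposition~\ref{prop.Miller}, which identifies the group of continuous eigenvalues of $(X_\phi,\R)$ with the frequency module $\mathcal{M}$. Combining the previous two steps, $\varphi_\rho$ is non-minimal if and only if there exist a nonzero $\lambda\in\mathcal{M}$ and an integer $n$ with $\rho\lambda=n$. Since $\rho\neq0$ and $\lambda\neq0$ force $n\neq0$, this is the same as saying that some nonzero integer $n$ satisfies $n\rho^{-1}=\lambda\in\mathcal{M}$. By the definition given above, this last assertion is precisely the negation of ``$\rho^{-1}$ is rationally independent of $\mathcal{M}$''. Taking contrapositives yields the claim: $\varphi_\rho$ is minimal, i.e.\ $\rho$ is $(\varphi,\omega)$-irrational, if and only if $\rho^{-1}$ is rationally independent of $\mathcal{M}$.

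The only genuinely delicate point I anticipate is the bookkeeping around the trivial eigenvalue $\lambda=0$: one must read Lemma~\ref{lem.Glasner} as producing a \emph{nonzero} eigenvalue, since $\lambda=0$ would always satisfy $\rho\lambda=0\in\Z$ and no time-$\rho$ map could ever be minimal; correspondingly one checks that the eigenvalue $\lambda=n\rho^{-1}$ produced in the reverse direction is nonzero, which is automatic once $n\neq0$. Everything else is the routine translation of arithmetic conditions carried out above.
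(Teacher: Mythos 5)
Your proposal is correct and follows essentially the same route as the paper: identify $(\varphi,\omega)$-irrationality with minimality of the time-$\rho$ translation (multiplication by $\xi_\rho$), apply Lemma~\ref{lem.Glasner} to reduce this to the non-existence of a continuous eigenvalue $\lambda$ with $\rho\lambda\in\Z$, and then use Proposition~\ref{prop.Miller} to translate that into rational independence of $\rho^{-1}$ from $\mathcal{M}$. Your two additions --- explicitly verifying minimality of the Bohr flow $(X_\phi,\{\varphi_t\})$ so that Lemma~\ref{lem.Glasner} applies, and the bookkeeping excluding the trivial eigenvalue $\lambda=0$ --- are sound details that the paper leaves implicit.
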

\proof By definition, $\rho$
is $(\varphi,\omega)$-irrational iff the multiplication by $\xi_{\rho}$ is
minimal. By Lemma \ref{lem.Glasner}, this happens iff for every continuous
eigenvalue $\lambda$ the quantity $\lambda\rho$ is not an integer. By
Proposition \ref{prop.Miller}, this is equivalent to the fact that $k\rho^{-1}$
is not a frequency in $\mathcal{M}$ for all $k\in\Z\setminus\{0\}$, which means
that $\rho^{-1}$ is rationally independent of $\mathcal{M}$. 
\endproof
\begin{cor}
\label{thm:conjKwapisz}
Suppose that the hypotheses of Theorem \ref{thm:KwapiszUnique} are
satisfied. Further, assume that $\rho(F)^{-1}$ is rationally independent of
$\mathcal{M}$ and that
\begin{equation}
\sup_{n,x}|F^{n}(x)- x - n\rho(x)|<\infty.
\end{equation} 
Then $F$ is semiconjugate to the multiplication by $\xi_{\rho(F)}$. 
\end{cor}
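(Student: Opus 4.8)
The plan is to recognize this corollary as a direct application of the general semiconjugacy result, \textbf{Theorem~\ref{t.gotthead}}, to the specific flow arising from the almost-periodic hull construction. First I would verify that the system $F$ defined in (\ref{e.ap-hullmap}) fits the general framework (\ref{e.fdefinition}): we take $\T=\Z$, base space $\Omega=X_\phi$, transversal action $\omega=\Id$, the $\varphi$-flow given by $\xi\odot t=\xi\cdot\xi_t$, and translation function $\tau(\xi)=\xi(0)$. This identification was already noted in the text preceding Theorem~\ref{thm:KwapiszUnique}. Under this identification the flow $\Phi$ from the general setting coincides with (the discrete-time iteration of) $F$, and the translation flow $T^\rho$ specializes to the multiplication by $\xi_{\rho}$, since with trivial $\omega$ we have $T^\rho_t(\xi)=\xi\odot t\rho=\xi\cdot\xi_{t\rho}$.

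Next I would check that the three hypotheses of the corollary feed exactly into the three hypotheses of Theorem~\ref{t.gotthead}. The assumption that $\phi$ is bounded away from zero and $f$ is orientation-preserving lets us invoke Theorem~\ref{thm:KwapiszUnique}, which guarantees the existence of a well-defined translation number $\rho(F)$ independent of the base point; this supplies the hypothesis that $\rho(\Phi)=\rho(F)$ exists as required in (\ref{e.rotation_number}). The assumption that $\rho(F)^{-1}$ is rationally independent of $\mathcal{M}$ is, by the Proposition immediately preceding the corollary, precisely the statement that $\rho(F)$ is $(\varphi,\omega)$-irrational, so the minimality hypothesis of Theorem~\ref{t.gotthead} holds. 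Finally, the supremum condition $\sup_{n,x}|F^{n}(x)-x-n\rho|<\infty$ is exactly the bounded mean motion condition (\ref{e.bmm}) for this system, once one translates between the base-point description $F^n(\xi)$ and the displacement $\tau_n$.

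With these correspondences in place, Theorem~\ref{t.gotthead} yields a continuous $\gamma:X_\phi\to\R$ solving the cohomological equation and hence a semiconjugacy $h(\xi)=\xi\odot\gamma(\xi)$ from $\Phi$ to $T^{\rho(F)}$, which is exactly the asserted semiconjugacy from $F$ to the multiplication by $\xi_{\rho(F)}$. The one point deserving care — and the main (though minor) obstacle — is the bookkeeping between the two ways of writing the boundedness condition: the corollary phrases it in terms of $F^n(x)-x-n\rho$ evaluated along the embedded real line $\{\xi_t\mid t\in\R\}$, whereas (\ref{e.bmm}) demands a uniform bound over all of the hull $X_\phi$. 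These agree because $\{\xi_t\mid t\in\R\}$ is dense in $X_\phi$, the relevant displacement is continuous, and $X_\phi$ is compact, so a bound on the dense orbit extends by continuity to the whole hull; I would spell out this density-plus-continuity argument to make the equivalence of the two conditions rigorous, after which the conclusion is immediate.
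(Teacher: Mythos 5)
Your proposal is correct and follows essentially the same route as the paper, which treats this corollary as an immediate consequence of Theorem~\ref{t.gotthead} once the hull system (\ref{e.ap-hullmap}) is identified as a special case of (\ref{e.fdefinition}) with $\T=\Z$, trivial transversal action $\omega=\Id$, $\varphi$ the translation flow on $X_\phi$ and $\tau(\xi)=\xi(0)$, and once the preceding Proposition (via Miller--Rosenblatt and Glasner's lemma) translates rational independence of $\rho(F)^{-1}$ from $\mathcal{M}$ into $(\varphi,\omega)$-irrationality. Your extra density-plus-continuity remark, transferring the bound on the embedded orbit $\{\xi_t\mid t\in\R\}$ to the bounded mean motion condition (\ref{e.bmm}) on all of $X_\phi$, is a sound and welcome piece of bookkeeping that the paper leaves implicit.
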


\subsubsection{Pattern-equivariant displacements with respect to quasicrystals}

We now show how Theorem \ref{t.gottschalk-hedlund} can be applied to the case
where the displacement $\phi$ is pattern-equivariant with respect to a
mathematical quasicrystal of the real line. We keep the exposition brief, for
more details see \cite{aliste:2010}. Recall that a Delone set in $\R$ is an
increasing sequence $X=(\theta_n)_{n\in\Z}$ of points in $\R$ such that
$0<\inf_{n}|\theta_n-\theta_{n-1}|$ and $\sup_{n}|\theta_n-
\theta_{n-1}|<+\infty$. If the set $\{\theta_n-\theta_{n-1}\}_{n\in\Z}$ is
finite, we say that $X$ has \emph{finite local complexity}. Given $x\in X$ and
$R>0$, the $R$-patch of $X$ centred at $x$ is defined as $P(x,R) = X\cap
[x-R,x+R]$. Two patches $P(x,R)$ and $P(y,R)$ are equivalent if $P(x,R)- x =
P(y,R) - y$.  We say that $X$ is \emph{repetitive} if for every patch
$P=P(x,R)$, there exists $M>0$ such that every interval of length $M$ contains
the centre of a patch equivalent to $P$. Given a patch $P(x,R)$, $t\in\R$ and
$N>0$, let
\[n(P,N,t) := \#\{y\in [t-N,t+N] \mid y\in X \text{ and } P(y,R) \sim P(x,R)\}\]
denote the number of patches of $X$ that are equivalent to $P$ and whose centre
is included in the interval $[t-N,t+N]$.  We say that $X$ has \emph{uniform
  patch frequencies} if $n(P,N,t)/2N$ converges uniformly in $t$ to a limit
$\nu(P)$, which is independent of $t$, when $N\rightarrow+\infty$. Finally, we
say that $X$ is a \emph{quasicrystal} if $X$ is non-periodic, repetitive and has
uniform patch frequencies. A function $\phi:\R\rightarrow\R$ is \emph{strongly
  $X$-equivariant} if there exists $R>0$ such that $ X - x \cap [-R,R] = X - y
\cap [-R,R] $ implies that \[ \phi(x) = \phi(y).\] A continuous function
$\phi:\R\rightarrow\R$ is \emph{$X$-equivariant} if it is the uniform limit of a
sequence of strongly $X$-equivariant continuous functions. A continuous function
$\phi$ is \emph{pattern-equivariant} if there exists a quasicrystal $X$ such
that $\phi$ is $X$-equivariant. The most well-known example for a quasicrystal
of the real line is given by the Fibonacci tiling, which can be constructed by
iterating the substitution
\begin{eqnarray*}
L \rightarrow LS,\\
S \rightarrow L
\end{eqnarray*}
starting with the sequence $L.L$ as depicted in Fig. \ref{r.fib} and then
replacing the infinite sequence thus obtained by a tiling in which each $L$ is
replaced by an interval of length $\tau$, where $\tau$ is the golden mean
number, and each $S$ by an unit-interval.

\begin{figure}
\begin{center}
\includegraphics{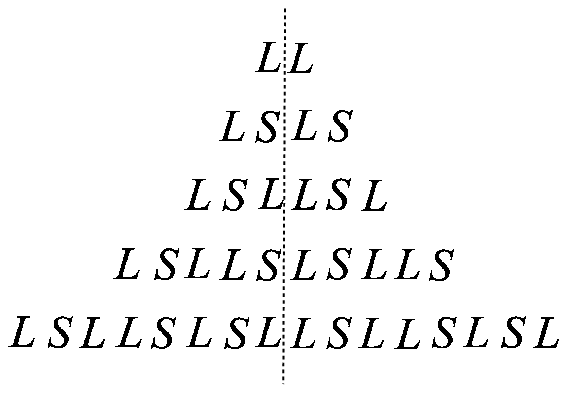}

\caption{The Fibonacci tiling}
\label{r.fib}
\end{center}
\end{figure}

Suppose that the displacement $\phi$ is $X$-equivariant for some quasicrystal
$X$. Then, similar to almost-periodic functions, there exists a compact metric
space $\Omega$, called the hull of $X$, which is the completion of the set
\[\{ X - t\mid t\in \R\}\] endowed with an appropriate metric (we omit the
precise definition of this metric here, see \cite{aliste:2010} for details). The
elements of $\Omega$ are all Delone sets, and there is a continuous action
$\Gamma$ on $\Omega$ given by translations $\Gamma_t Y = Y - t$ for all
$t\in\R$ and $Y\in\Omega$. The repetitivity of $X$ is equivalent to
the minimality of $\Gamma$ and the uniform pattern frequencies property is
equivalent to the unique ergodicity of $\Gamma$ (see \cite{aliste:2010} and
references therein for details). Moreover, there is a unique continuous function
$\tau:\Omega\rightarrow\R$ such that $\tau(X-t) = \phi(t)$ for all $t\in\R$, and
thus a continuous map $F:\Omega\rightarrow\Omega$ defined by
\begin{equation}
\label{eq.Delone} 
F(Y) = Y - \tau(Y)\quad, \quad Y\in\Omega. 
\end{equation}
The map $F$ is related to $f$ by the relation $F(X-t) = X - f(t)$ for all $t\in\R$. 
\begin{thm}[\cite{aliste:2010}]
  Suppose that $F:\Omega\rightarrow\Omega$ has the form of \eqref{eq.Delone}. If
  $\Phi$ does not have zeros, then $F$ has a unique translation number
  $\rho(F)$, defined as
  \[\rho(F):=\lim_{n\rightarrow\infty}\frac{1}{n}\sum_{k=0}^{n-1}\tau(F^{k}(X)).\]
\end{thm}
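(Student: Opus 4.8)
\emph{Proof strategy.} The plan is to recognise the translation number as a Birkhoff average for $F$ and to deduce the required convergence from the unique ergodicity of the translation flow $\Gamma$ on the hull. First I would record that, since $\phi$ has no zeros and is continuous on the connected line $\R$, it has constant sign; as $\tau$ agrees with $\phi$ on the dense orbit $\{X-t\mid t\in\R\}$ and $\Omega$ is compact, $\tau$ has constant sign and is bounded away from zero, say $0<c\le\tau\le M$ (reversing the orientation of $\R$ if $\tau$ is negative). Writing $\tau_n=\sum_{k=0}^{n-1}\tau\circ F^k$ via the cocycle relation, the claimed quantity is exactly $\nLim\tfrac1n\tau_n(Y)$, the Birkhoff average of the continuous function $\tau$ under $F$. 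Hence it suffices to prove that $(\Omega,F)$ is uniquely ergodic: the classical uniform ergodic theorem then yields that $\tfrac1n\tau_n(Y)$ converges uniformly in $Y$ to $\int_\Omega\tau\,d\nu=:\rho(F)$, giving both existence and independence of the base point, and specialising $Y=X-t$ gives independence of $t$.

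The heart of the argument is to exhibit $(\Omega,\Gamma)$ as the suspension (special flow) of $(\Omega,F)$ under the roof $\tau$, and thereby transport ergodic information between the two. The key identity is $F(Y)=\Gamma_{\tau(Y)}(Y)=Y-\tau(Y)$, so that the map $\psi(Y,r)=\Gamma_rY=Y-r$, defined on $\{(Y,r)\mid 0\le r<\tau(Y)\}$, intertwines the suspension flow $\Theta_u(Y,r)=(Y,r+u)$ with $\Gamma$, namely $\psi\circ\Theta_u=\Gamma_u\circ\psi$. I would then verify that $\psi$ is a homeomorphism (here $F$, induced by the orientation-preserving $f$, is a homeomorphism of $\Omega$). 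Because $X$ is non-periodic, the $\R$-action $\Gamma$ on the hull is free, so each orbit is a faithful copy of $\R$; since $\tau\ge c>0$, the iterates $F^k(Y)=\Gamma_{\tau_k(Y)}(Y)$ sit at a bi-infinite, strictly increasing sequence of positions $\tau_k(Y)\to\pm\infty$ that partition the orbit into arcs of length $\tau(F^kY)$. Thus every point of $\Omega$ lies in exactly one such arc, $\psi$ is a continuous bijection, hence by compactness a homeomorphism conjugating the special flow to $\Gamma$.

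With the conjugacy in hand, unique ergodicity of $(\Omega,F)$ follows from that of $\Gamma$. As recalled above, the uniform patch frequencies of the quasicrystal $X$ are equivalent to unique ergodicity of $\Gamma$, so the special flow is uniquely ergodic. By the standard correspondence between the flow-invariant measures of a special flow and the invariant measures of its base map---valid here because the roof $\tau$ is continuous and satisfies $0<c\le\tau\le M$---the $F$-invariant probability measures are in bijection with the $\Gamma$-invariant ones via $d\mu\propto d\nu\,dr$. Uniqueness upstairs therefore forces a unique $F$-invariant measure $\nu$, so $(\Omega,F)$ is uniquely ergodic, and $\rho(F)=\int_\Omega\tau\,d\nu$ is exactly the time-change constant (flow time per base iterate) relating the two clocks $n$ and $\tau_n$.

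Combining the two steps completes the proof. The step I expect to be the main obstacle is the rigorous identification of the suspension with $\Gamma$ together with the measure correspondence: one must check that $\psi$ is genuinely bijective---which rests on freeness of the hull action (non-periodicity of $X$) and on $\tau$ being bounded away from zero so that the $F$-iterates exhaust each $\Gamma$-orbit---and then invoke the special-flow machinery to push unique ergodicity downward. Everything else is either the elementary sign-and-boundedness reduction of the first step or the textbook uniform ergodic theorem for uniquely ergodic systems.
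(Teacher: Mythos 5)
You are attempting to prove a statement that this paper itself only quotes from \cite{aliste:2010} without proof, so the comparison below is with the argument of the cited source (and the closely related one in \cite{kwapisz:2000}). Your reduction to unique ergodicity of $(\Omega,F)$ fails at its central step. The map $\psi(Y,r)=\Gamma_r Y=Y-r$, defined on $\{(Y,r)\mid Y\in\Omega,\ 0\le r<\tau(Y)\}$ with the mapping-torus identification, is surjective and intertwines the suspension flow with $\Gamma$, but it is very far from injective: for any $Z\in\Omega$ and any $r\in[0,c)$, where $c=\inf_\Omega\tau>0$, the pair $(Z+r,\,r)$ is a preimage of $Z$, since $r<c\le\tau(Z+r)$; thus $\psi^{-1}(Z)$ contains a continuum of points. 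Your heuristic that ``every point of $\Omega$ lies in exactly one arc'' conflates the (true) fact that the arcs $[\tau_k(Y),\tau_{k+1}(Y))$ partition the orbit of a \emph{fixed} $Y$ with injectivity over all starting points simultaneously: every point of a $\Gamma$-orbit is the left endpoint of its own arc system. The root of the error is that the base of your suspension is the whole hull, which is not a cross-section of the flow; the honest cross-section is the canonical transversal $\Omega_0=\{Y\in\Omega\mid 0\in Y\}$ (a Cantor set), as the paper itself notes in its subsection on special flows. What $\psi$ really gives is a continuum-to-one \emph{factor map} from the mapping torus of $F$ onto $(\Omega,\Gamma)$, and unique ergodicity does not lift from a factor to an extension, so the measure correspondence collapses as well. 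A toy case makes this vivid: for $\Omega=S^1$, $\Gamma_t x=x-t$ and $\tau\equiv\alpha$, your suspension is a two-torus, not the circle.

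Worse, the lemma you aim for is false under the theorem's hypotheses, so no repair of $\psi$ can save the strategy. Take the Fibonacci hull and $\tau\equiv c$ constant (constant functions are trivially $X$-equivariant, and $\tau$ has no zeros); then $F=\Gamma_c$. Choosing $c$ resonant with a nonzero continuous eigenvalue $\lambda$ of the Fibonacci flow, i.e.\ $e^{i\lambda c}=1$ (such eigenvalues exist, and cf.\ Lemma 2.4/\cite{Glasner} as quoted in the paper), the eigenfunction is a continuous nonconstant $F$-invariant function, so the fully supported $\Gamma$-invariant measure is not $F$-ergodic and $F$ carries several invariant measures --- yet $\rho(F)=c$ exists and is trivially unique. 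This mirrors the circle case, where homeomorphisms with rational rotation number are not uniquely ergodic although the rotation number is unique; uniqueness of the translation number is genuinely weaker than unique ergodicity of $F$. The actual proof must therefore establish uniform convergence of $\tau_n/n$ directly, and it does so from exactly the two ingredients you correctly isolated at the outset, $0<c\le\tau\le M$ and order preservation along the leaves: one compares crossing counts such as $N(Y,T)=\max\{n\mid \tau_n(Y)\le T\}$ at different points of the same $\Gamma$-orbit via monotonicity, and then uses unique ergodicity (equivalently, uniform patch frequencies) and minimality of the \emph{flow} $\Gamma$ --- not of $F$ --- to make these comparisons uniform over $\Omega$. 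Finally, a smaller but real slip: nonvanishing of $\phi$ on the dense orbit does not yield $\tau\neq 0$ on $\Omega$, since zeros can appear in the closure (exactly as for almost periodic functions with infimum $0$); the hypothesis must be read as nonvanishing of the hull function $\tau$, after which compactness and connectedness of $\Omega$ do give the bound $0<c\le|\tau|\le M$ you use.
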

The following result (which already appears in \cite{aliste:2010}) is again a
direct consequence of Theorem~\ref{t.gottschalk-hedlund}.
\begin{cor}\label{alisteThesis}
  Suppose that $\Gamma_{\rho(F)}$ is minimal, and that
  $\sup_{n}|\sum_{k=0}^{n-1}\tau(F^{k}(X))-n\rho(F)|<\infty$. Then $F$ is
  semiconjugate to $\Gamma_{\rho(F)}$.
\end{cor}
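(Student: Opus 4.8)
The plan is to recognise $F$ as an instance of the general flow (\ref{e.fdefinition}) and then apply Theorem~\ref{t.gotthead}. First I would fix the data: take $\T=\Z$, let $\varphi$ be the translation action $Y\odot t=\Gamma_t Y=Y-t$ on the hull $\Omega$, let the transversal action be trivial, $\omega_n=\Id_\Omega$, and take as translation function the given continuous $\tau$ from (\ref{eq.Delone}), extended to a $\Z$-cocycle by $\tau_n(Y)=\sum_{k=0}^{n-1}\tau(F^k(Y))$. A direct computation then shows that (\ref{e.tau-property}) holds and that $\Phi_n(Y)=\omega_n(Y)\odot\tau_n(Y)=Y-\tau_n(Y)=F^n(Y)$, so that $\Phi=F$. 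The order-preservation requirement (\ref{e.f_preserves_order}) reduces, on the orbit of $X$, to $f(s+t)\ge f(s)$ for $t\ge0$, hence holds because $f$ is non-decreasing, and it passes to all of $\Omega$ since the $\Gamma$-orbit of $X$ is dense and $\tau$ is continuous. Finally $\rho(\Phi)=\rho(F)$ by definition of the translation number.

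Next I would identify the target. As $\omega$ is trivial, (\ref{e.translation}) gives $T^\rho_n(Y)=Y\odot n\rho=Y-n\rho=(\Gamma_\rho)^n(Y)$, so $T^{\rho(F)}$ is minimal exactly when $\Gamma_{\rho(F)}$ is minimal; the standing hypothesis of the corollary therefore says precisely that $\rho(F)$ is $(\varphi,\omega)$-irrational, which is what Theorem~\ref{t.gotthead} requires. Once the bounded mean motion condition (\ref{e.bmm}) is secured, Theorem~\ref{t.gotthead} produces a continuous solution $\gamma$ of the cohomological equation together with the semiconjugacy $h(Y)=Y\odot\gamma(Y)=Y-\gamma(Y)$ from $F$ to $\Gamma_{\rho(F)}$, which is the assertion.

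The hard part is that the corollary only assumes bounded mean motion at the single point $X$, i.e. $\sup_n|\tau_n(X)-n\rho(F)|<\infty$, whereas Theorem~\ref{t.gotthead} needs the uniform bound (\ref{e.bmm}) over all of $\Omega$; this is exactly the gap that is absent in Corollary~\ref{thm:conjKwapisz}, where the uniform bound is assumed outright. So the main step is to upgrade the single-point bound to a uniform one. Since for each fixed $n$ the function $Y\mapsto\tau_n(Y)-n\rho(F)$ is continuous and the $\Gamma$-orbit of $X$ is dense, it suffices to bound $\tau_n(X-s)-n\rho(F)=f^n(s)-s-n\rho(F)$ uniformly in $s\in\R$ and $n\in\N$, given the bound $\sup_n|f^n(0)-n\rho(F)|<\infty$ at $s=0$. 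This is where I expect the genuine difficulty to lie, and where soft dynamics is not enough: unique ergodicity by itself only yields sublinear deviations, not a uniform bound. Instead I would invoke the combinatorial structure of the quasicrystal as in \cite{aliste:2010}: finite local complexity together with the monotonicity of $f$ allow one to compare the orbit of a general point with that of $X$ up to a defect governed by the local patch, repetitivity (equivalently, minimality of $\Gamma$) ensures that the relevant patches recur along the orbit of $X$ with bounded gaps, and uniform patch frequencies (equivalently, unique ergodicity) control the accumulated discrepancy. Combining these transfers the bound at $X$ to the uniform bound (\ref{e.bmm}), after which Theorem~\ref{t.gotthead} applies verbatim.
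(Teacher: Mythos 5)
Your reduction to the general framework is precisely the paper's own route: the subsection already sets up $\varphi=\Gamma$, trivial transversal action $\omega$, and the translation function $\tau$ from (\ref{eq.Delone}), and the paper then simply declares the corollary ``a direct consequence'' of the main theorem (the citation of Theorem~\ref{t.gottschalk-hedlund} at that point is evidently a slip for Theorem~\ref{t.gotthead}). So your first two paragraphs coincide with the intended proof, including the verification of (\ref{e.f_preserves_order}) by density and continuity, and you have correctly located the one genuinely non-trivial point: the hypothesis bounds the deviations only along the orbit of the single point $X$, whereas (\ref{e.bmm}) is uniform over $\Omega$. The paper passes over this upgrade silently (note that in Corollary~\ref{thm:conjKwapisz} the uniform bound is assumed outright), implicitly delegating it to \cite{aliste:2010}.

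Where your proposal falls short is the bridge itself: the patch-comparison mechanism you sketch is not a proof and is unlikely to become one, since comparing orbits patch-by-patch loses an error per iterate, and neither repetitivity nor uniform patch frequencies can convert an accumulated discrepancy of order $n\varepsilon$ into a uniform bound --- as you yourself observe, frequency-type (unique ergodicity) arguments only yield sublinear deviations. In fact no quasicrystal combinatorics is needed; the upgrade is soft and uses only ingredients already present in the paper. From the cocycle identity $\tau_{n+m}(Y)=\tau_n(F^m(Y))+\tau_m(Y)$, the bound $C$ at $X$ gives $\left|\tau_n(F^m(X))-n\rho\right|\leq 2C$, and by continuity of each $\tau_n$ this bound holds on the $F$-orbit closure $K$ of $X$. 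Pick an $F$-minimal set $M\subseteq K$. On $M$, Theorem~\ref{t.gottschalk-hedlund} applies directly (there boundedness at one point suffices) and yields a continuous bounded $\gamma$ on $M$, hence a leaf-preserving semiconjugacy $h(Y)=Y\odot\gamma(Y)$ from $F|_M$ onto $\Omega$, surjectivity coming from minimality of $\Gamma_{\rho(F)}$. Surjectivity plus leaf-preservation imply that every leaf meets $M$ in a relatively dense set with gaps at most $2\sup_M|\gamma|$: each point $Y\odot u$ equals $h(z_u)$ with $z_u=Y\odot\bigl(u-\gamma(z_u)\bigr)\in M$. Then the order-preservation (\ref{e.f_preserves_order}), iterated, sandwiches $\tau_n(Y)$ via
\begin{equation*}
t^-+\tau_n(Y\odot t^-)\ \leq\ \tau_n(Y)\ \leq\ t^++\tau_n(Y\odot t^+)\ ,
\end{equation*}
where $Y\odot t^\pm\in M$ and $|t^\pm|\leq 2\sup_M|\gamma|$, which gives exactly (\ref{e.bmm}) with a uniform constant; Theorem~\ref{t.gotthead} then applies verbatim. (Equivalently, one may rerun the extension half of the proof of Theorem~\ref{t.gotthead} starting from $M$, since that argument only uses $\gamma$ bounded on the minimal set.) So: correct structure, gap correctly identified, but the step you left to the combinatorics of \cite{aliste:2010} should instead be closed by this cocycle/minimal-set argument.
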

\subsubsection{Special flows}
Let us end this section by noticing that both of the above hull constructions,
for functions with almost periodic and pattern-equivariant displacements, lead
to examples of special flows. Recall the definition of a special flow. Let
$\tilde{X}$ be a compact metric space, $\sigma:X\rightarrow X$ a minimal
transformation and $h:\tilde{X}\rightarrow \R^+$ a continuous function. Let
$X = \tilde{X}\times \R/\sim$, where $\sim$ is the smallest equivalence relation
such that $(\tilde{x},h(\tilde{x}))\sim (\tilde{\sigma}({x}),0)$ for all
$\tilde{x}\in\tilde{X}$. It is well-known that the natural flow over $\tilde
X\times \R/\sim$ descends to a well-defined flow over $X$, which satisfies
$\varphi_t [(\tilde{x},s)] = [(\tilde{x},s+t)]$ for all $\tilde{x}\in X$ and
$t,s\in\R$, where $[(x,s)]$ denotes the equivalence class of $(x,t)$. The
dynamical system $(X,\varphi)$ is known as the special flow over $\sigma$ with
roof function $h$.

In the case of almost periodic functions, the map $\sigma$ can be chosen as the
first return map to $X_{\phi,s}$ for $s>0$, where $X_{\phi,s}$ is the closure of
$\{\xi_{ns}\mid n\in\Z\}$ in the uniform topology. In the case of quasicrystals,
$\sigma$ is given by the first return map to the canonical transversal
$\Omega_0=\{\Omega\in X\mid 0\in\Omega\}$.

\subsection{Scalar differential equations almost periodic in time and space} \label{APspacetime}
The study of differential equations with almost periodic right side was already
initiated by Favard \cite{favard:1928} in 1928 and has a long tradition
since. \cite{shen/yi:1998} gives a good overview and more recent references. We
first recall some basic facts. Given $f:\R\times\R^d,\ (t,x) \mapsto f(t,x)$ we
define $f_t : \R \times \R^d \to \R$ by $f_t(s,x) = f(s+t,x)$. Then, similar to
above, $f$ is called {\em uniformly almost periodic} in $t$ if $\{ f_t\mid
t\in\R\}$ has compact closure in ${\cal C}^0(\R\times\R^d,\R^d)$ in the topology
of uniform convergence. In this case
\[
{\cal H}(f) \ := \ \overline{\{f_t\mid t\in\R\}} 
\]
is called the {\em hull} of $f$ and the flow 
\[
\omega\ :\ \R\times{\cal H}(f) \to {\cal H}(f)\quad , \quad (t,g) \mapsto
(t,g_t)
\]
is uniformly almost periodic \cite{ellis:1969} and minimal.  An almost periodic
scalar differential equation is of the form
\begin{equation} \label{e.ap-scalarODE}
x'\ = \ f(t,x)
\end{equation}
with $f:\R\times\R \to \R$ uniformly almost periodic in $t$. It  gives
rise to a skew product flow of the form
\[
\widehat \Phi \ : \ \R \times{\cal H}(f) \times \R \to {\cal H}(f)\times\R\quad , \quad
(t,g,x)\mapsto (g_t,\xi(t,g,x)) \ ,
\]
where $t\mapsto \xi(t,g,x)$ is the solution of the scalar differential equation
$x'=g(t,x)$ with initial values $t_0=0$ and $x_0=x$. (Here, we make the usual
assumptions of Lipschitz-continuity in $x$ in order to guarantee the existence
and uniqueness of solutions. It suffices to make these assumptions on $f$, since
they carry over to the hull \cite[Theorem 3.1 in Part 1]{shen/yi:1998}.

When no further structure concerning the dependence of $f$ on $x$ is assumed,
the study of (\ref{e.ap-scalarODE}) is mostly restricted to the description of
bounded solutions and their orbit closures \cite{johnson:1981}. Much more can be
said when $f$ is periodic in $x$ and therefore projects to a function on
$\R\times \T^1$. In this case $\widehat\Phi$ projects to a skew product circle
flow $\Phi$ over the minimal base $({\cal H}(f),\omega)$, and an extensive
theory exists to describe the dynamical behaviour \cite{huang/yi:2007}. As
mentioned above, in this situation the rotation number is unique by
Theorem~\ref{t.herman} and Corollary~\ref{c.skew-products} provides an
equivalent condition for the existence of a semiconjugacy to the translation
flow $(t,g,x) \mapsto (g_t,x+t\rho \bmod 1)$. \smallskip

However, the main interest of the general setting introduced in
Section~\ref{MainResults} in this context is the fact it also applies when $f$
is only almost periodic in $x$. Given $f:\R^2\to\R$ we define
$f_{t,x}:\R^2\to\R$ by $f_{t,x}(s,y)=f(s+t,y+x)$ and say that $f$ is {\em
  uniformly almost periodic in $t$ and $x$} if $\{f_{t,x} \mid (t,x)\in\R^2\}$
has compact closure in ${\cal C}^0(\R^2,\R)$. Further we let
\[
\Omega(f) \ := \ \overline{\{f_{t,x}\mid (t,x)\in\R^2\}} \ .
\]
Then (\ref{e.ap-scalarODE}) induces a flow
\begin{equation}\label{e.ap-timespace}
\Phi \ : \ \R\times\Omega(f) \to \Omega(f)\quad , \quad (t,g) \mapsto
g_{t,\xi(t,g)} \ ,
\end{equation}
where $t\mapsto \xi(t,g)$ is the solution of $x'=g(t,x)$ with initial values
$t_0=x_0=0$. Again, Theorem~\ref{t.gotthead} can be applied (with $\omega_t(g) =
g_{t,0}$, $\varphi_t(g)= g_{0,t}$ and $\tau_t(g) = \xi(t,g)$) to determine
whether $\Phi$ is semiconjugate to the translation flow $(t,g) \mapsto
g_{t,t\rho}$ on $\Omega(f)$.  

\begin{cor}
  Suppose that that the translation flow $T^\rho : (t,g)\mapsto g_{t,t\rho}$ on
  $\Omega$ is minimal. Then the flow $\Phi$ given by (\ref{e.ap-timespace}) is
  semi-conjugate to $T^\rho$ if and only if the solution $t\mapsto \xi(t,f)$ of
  $x'=f(t,x)$ satisfies 
  \begin{equation}\label{e.11}
     \sup_{t\in\R}|\xi(t,f)| \ < \ \infty \ .
   \end{equation}
 \end{cor}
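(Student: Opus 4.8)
The plan is to recognise the flow $\Phi$ of (\ref{e.ap-timespace}) as an instance of the abstract flow (\ref{e.fdefinition}) and then to read off the conclusion from Theorem~\ref{t.gotthead}. First I would fix the identifications indicated in the text, namely $\omega_t(g)=g_{t,0}$, the leaf action $g\odot s=g_{0,s}$ and the translation function $\tau_t(g)=\xi(t,g)$, and verify the structural requirements one at a time. The commutation relation (\ref{e.omega_preserves_leaves}) is a purely algebraic identity between the shift operations, since both $\omega_s(g\odot t)$ and $\omega_s(g)\odot t$ evaluate to the function $(u,y)\mapsto g(u+s,y+t)$. That $\Phi$ is a genuine continuous flow and that $\tau$ satisfies the cocycle relation (\ref{e.tau-property}) would both be deduced from existence, uniqueness and continuous dependence for $x'=g(t,x)$: the Lipschitz assumption on $f$ passes to the hull, almost periodicity in $x$ makes $f$ (hence every $g\in\Omega(f)$) bounded so that solutions are global, and compactness of $\Omega(f)$ supplies the continuity. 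Concretely, setting $\eta(s)=\xi(s+t,g)-\xi(t,g)$ one checks that $\eta$ solves $x'=\Phi_t(g)(s,x)$ with $\eta(0)=0$, whence $\xi(s+t,g)=\xi(s,\Phi_t(g))+\xi(t,g)$, which is exactly (\ref{e.tau-property}).

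The remaining structural point, order preservation (\ref{e.f_preserves_order}), is where the scalar (one--dimensional) nature of the equation enters decisively, and I expect this to be the conceptual heart of the verification. On the leaf $\varphi_\R(g)=\{g_{0,s}\mid s\in\R\}$ the coordinate $s$ is carried by $\Phi_t$ to $\xi(t,g_{0,s})+s$, which is precisely the value at time $t$ of the solution of $x'=g(u,x)$ issuing from $s$. Since two solutions of a scalar equation cannot cross, again by uniqueness, the map $s\mapsto\xi(t,g_{0,s})+s$ is nondecreasing; this is the order preservation of $\Phi$ on leaves and yields (\ref{e.f_preserves_order}). I would stress that this is the one step that genuinely uses dimension one and would fail for systems.

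With the framework in place the hypotheses of Theorem~\ref{t.gotthead} are satisfied: minimality of the translation flow $T^\rho:(t,g)\mapsto g_{t,t\rho}$ is by definition the $(\varphi,\omega)$--irrationality of $\rho$, which I identify with the translation number $\rho(\Phi)$. Theorem~\ref{t.gotthead} then asserts that $\Phi$ is semiconjugate to $T^\rho$, via $h(g)=g\odot\gamma(g)=g_{0,\gamma(g)}$ as in (\ref{e.h_definition}), if and only if $\Phi$ has bounded mean motion (\ref{e.bmm}). Finally I would evaluate the mean motion along the distinguished point $f=f_{0,0}\in\Omega(f)$, for which $\tau_t(f)=\xi(t,f)$; a bounded solution forces the translation number to vanish, so that $\sup_{t}|\tau_t(f)-t\rho|<\infty$ reduces to exactly (\ref{e.11}). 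The hardest part of this last step is the passage from the ``for all $x$'' formulation of bounded mean motion in (\ref{e.bmm}) to the single--solution condition (\ref{e.11}): here I would invoke the single--point criterion of the Gottschalk--Hedlund Theorem~\ref{t.gottschalk-hedlund}, by which boundedness of the cocycle at the one point $f$ already forces bounded mean motion everywhere, thereby closing the equivalence.
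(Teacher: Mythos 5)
Your reduction of (\ref{e.ap-timespace}) to the abstract setting is exactly the paper's route: the paper proves this corollary simply by remarking that Theorem~\ref{t.gotthead} applies with $\omega_t(g)=g_{t,0}$, $\varphi_t(g)=g_{0,t}$ and $\tau_t(g)=\xi(t,g)$, and your verifications of the commutation relation, of the cocycle identity (\ref{e.tau-property}) via uniqueness of solutions, and of the leaf-order preservation (\ref{e.f_preserves_order}) via non-crossing of solutions of a scalar equation are correct and fill in precisely what the paper leaves implicit. Your bookkeeping of $\rho$ (boundedness of $\xi(\cdot,f)$ forces the translation number to vanish, so the minimality hypothesis is effectively read for $\rho=\rho(\Phi)$) also matches the paper's implicit reading of its own statement.

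The genuine gap is in your final step. You pass from the single-solution condition (\ref{e.11}) to the bounded mean motion hypothesis (\ref{e.bmm}) by invoking ``the single-point criterion of the Gottschalk--Hedlund Theorem~\ref{t.gottschalk-hedlund}''. But that criterion is available only when the flow carrying the cocycle is \emph{minimal}, and here $\tau_t=\xi(t,\cdot)$ is a cocycle over $\Phi$ itself, whose minimality is exactly what is \emph{not} assumed: only $T^\rho$ is assumed minimal, and $\tau$ is not a cocycle over $T^\rho$. (If $\Phi$ were known to be minimal, Theorem~\ref{t.gotthead} would be largely redundant, since Gottschalk--Hedlund would apply directly.) For a non-minimal flow, boundedness of a cocycle at one point does not propagate in general; what it gives for free is only a bound on the $\Phi$-orbit closure of $f$, via the cocycle identity $\xi(t+s,f)=\xi(t,\Phi_s(f))+\xi(s,f)$ together with continuity of $g\mapsto\xi(t,g)$ for fixed $t$, and the $\Phi$-orbit closure of $f$ need not be all of $\Omega(f)$. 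The paper bridges this differently: it asserts, in the remark following the corollary, that ``by continuity'' condition (\ref{e.11}) already upgrades to the uniform bound $\sup_{t\in\R,\,g\in\Omega(f)}|\xi(t,g)|<\infty$, which is (\ref{e.bmm}) with $\rho(\Phi)=0$, the hypothesis that Theorem~\ref{t.gotthead} actually consumes. To reach arbitrary $g\in\Omega(f)$ one needs more than a bare one-point criterion: the natural argument uses the leaf structure --- a minimal set $M$ inside the orbit closure of $f$ carries a continuous solution by Gottschalk--Hedlund applied to the genuinely minimal flow $\Phi|_M$, minimality of $T^\rho$ forces $h(M)=\Omega(f)$, so $M$ meets every $\varphi$-leaf with uniformly bounded gaps, and (\ref{e.f_preserves_order}) then sandwiches $\xi(t,g)$ between values at nearby points of $M$ --- i.e.\ an argument of the kind appearing in the proof of Theorem~\ref{t.gotthead}, not an application of Theorem~\ref{t.gottschalk-hedlund} at the single point $f$. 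As written, your inference ``boundedness at $f$ already forces bounded mean motion everywhere'' is therefore unjustified, and it is the one substantive step of the corollary.
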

 Note that by continuity (\ref{e.11}) implies
 $\sup_{t\in\R,g\in\Omega(f)}|\xi(t,g)| < \infty$.
\medskip

 In order to give a somewhat more explicit example, suppose $F:\R^4\to\R$ is
 periodic with least period 1 in all coordinates, such that it projects to a
 function on the four-torus $\T^4=\R^4/\Z^4$. Fix a totally irrational vector
 $(\alpha,\beta)\in\R^4,\ \alpha,\beta\in\R^2$ and let
 $f(t,x)=F(t\cdot\alpha,x\cdot\beta)$. Then the hull $\Omega(f)$ is homeomorphic
 to $\T^4$, and the skew product flow $\Phi$ is of the form
\[
\Phi \ : \ \R\times\T^4 \to \T^4 \quad , \quad (t,z) \mapsto
(u+t\alpha,v+\tau_t(z)\beta) \ ,
\]
where $z=(u,v)$ and $\tau:\R\times\T^4$ satisfies
(\ref{e.tau-property}). Assuming that $\alpha=(1,\alpha_1)$, the Poincar\'e
section of this flow in $\{0\}\times\T^3$ gives rise to a skew product map of
the form
\begin{equation} \label{e.doubly_qpf_map}
\Psi \ : \ \T^3\to \T^3 \quad , \quad \zeta \mapsto
(\theta+\alpha_1,\xi+\tau_1(0,\xi)\beta) \ ,
\end{equation}
where $\zeta=(\theta,\xi)$ with $\theta\in\T^1$ and $\zeta\in\T^2$. One may say
that this is the simplest type of dynamical systems that exhibits
quasiperiodicity in both variables. Note that (the discrete-time flow generated
by) $\Psi$ can be written in the general form of (\ref{e.fdefinition}) with
$\omega_t(\zeta)=\zeta+t\cdot (\alpha_1,0,0)$, $\varphi_t(\zeta)=\zeta+t\cdot
(0,\beta)$ and $\tau_t(\zeta)=\tau_t(0,\zeta)$, $t\in\Z$.

We believe that maps of this type are a good starting point for studying further
questions concerning the type of flows introduced in Section 2, reflecting the
full generality of the setting while at the same time having a rather simple
structure. Hence, we end by pointing out the following still open problems.
\begin{questions} 
\alphlist
\item Does $\Psi$ always have a unique rotation number whenever $0$ is not
  contained in the rotation interval?
\item Suppose $\Psi$ is semiconjugate to a minimal rotation on $\T^3$. Does this
  imply that $\Psi$ is uniquely ergodic?
\item Suppose $\rho(\Phi)$ is $(\varphi,\omega)$-irrational and
  let $$\Phi^\eps_t(z) = (u+t\alpha,v+(\tau_t(z)+\eps)\beta) \ .$$ Then, is the
  map $\eps\mapsto \rho(\Phi^\eps)$ strictly increasing in $\eps=0$? ({\em
    Absence of mode-locking})

 For skew product circle flows, the answer
  is positive \cite{bjerkloev/jaeger:2009}.
\listend
\end{questions}

\bibliographystyle{unsrt}

\end{document}